\documentclass[10pt]{amsart}
\usepackage{bbm}
\usepackage[normalem]{ulem}
\usepackage{mathrsfs}
\usepackage[usenames,dvipsnames,svgnames,table]{xcolor}
\usepackage[pdftex,pagebackref,colorlinks=true,linkcolor=BrickRed,citecolor=OliveGreen,pdfstartview=FitH]{hyperref}

\usepackage{bbm}
\usepackage[normalem]{ulem}
\usepackage{mathrsfs}
\usepackage[usenames,dvipsnames,svgnames,table]{xcolor}
\usepackage[pdftex,pagebackref,colorlinks=true,linkcolor=BrickRed,citecolor=OliveGreen,pdfstartview=FitH]{hyperref}

\usepackage{graphicx}
\usepackage{hyperref}
\usepackage{float}
\usepackage{hyperref}
\usepackage{enumitem}
\usepackage{verbatim}

\usepackage{tikz}
\usepackage{tikz-cd}
\usetikzlibrary{calc}
\def\radius{0.05}
\def\tikzscale{1.5} 
\newcommand{\tile}[2]{
  \draw (#1, #2) rectangle (#1 + 1, #2 + 1);
  \filldraw (#1, #2) circle (\radius);
  \filldraw (#1 + 1, #2) circle (\radius);
  \filldraw (#1, #2 + 1) circle (\radius);
  \filldraw (#1 + 1, #2 + 1) circle (\radius);
}

\definecolor{plum}{rgb}{0.56, 0.27, 0.52}
\definecolor{violet}{rgb}{0.58, 0.0, 0.83}
\definecolor{cornflowerblue}{rgb}{0.39, 0.58, 0.93}
\definecolor{slate}{rgb}{0.28, 0.24, 0.55}

\def\centerarc[#1](#2)(#3:#4:#5)
    { \draw[#1] ($(#2)+({#5*cos(#3)},{#5*sin(#3)})$) arc (#3:#4:#5); }

\newtheorem{thm}{Theorem}[section]

\newtheorem{lem}[thm]{Lemma}
\newtheorem{prop}[thm]{Proposition}

\theoremstyle{definition}

\theoremstyle{remark}

\newtheorem{remk}[thm]{Remark}

\theoremstyle{definition}

\newcommand{\Rbb}{ {\mathbb R}}
\newcommand{\Zbb}{ {\mathbb Z}}

\newcommand{\Cbb}{ {\mathbb C}}
\newcommand{\Qbb}{ {\mathbb Q}}

\newcommand{\oY}{ \overline{Y}}
\newcommand{\oX}{ \overline{X}}
\newcommand{\op}{ \overline{p}}
\newcommand{\oq}{ \overline{q}}

\newcommand{\oL}{ \overline{L}}
\newcommand{\oLambda}{ \overline{\Lambda}}

\newcommand{\tgamma}{ \widetilde{\gamma}}
\newcommand{\tp}{ \widetilde{p}}
\newcommand{\tP}{ \widetilde{P}}
\newcommand{\tK}{ \widetilde{K}}

\newcommand{\Ecal}{ {\mathcal E}}

\newcommand{\Mcal}{ {\mathcal M}}

\newcommand{\Tcal}{ {\mathcal T}}

\newcommand{\ind}{\rm{ind}}

\renewcommand{\Im}{\mathrm{Im}}

\def\<{\left\langle}
\def\>{\right\rangle}

\newcommand{\cjin}[1]{\textcolor{purple}{#1}}
\newcommand{\cjout}[1]{\textcolor{purple}{\sout{#1}}}


\title{Haupt's theorem for strata of abelian differentials}

\author{Matt Bainbridge}

\address{Department of Mathematics, Indiana University, 
Bloomington, IN, 47401, USA }
\email{bainbridge.matt@gmail.com}

\author{Chris Johnson}
\address{Department of Mathematics and Computer Science, Western Carolina University, 
Cullowhee, NC 28723, USA}
\email{ccjohnson@gmail.com}

\author{Chris Judge}
\address{Department of Mathematics, Indiana University, 
Bloomington, IN, 47401, USA }
\email{cjudge@indiana.edu}

\author{Insung Park}
\address{Department of Mathematics, Indiana University, 
Bloomington, IN, 47401, USA }
\email{park433@iu.edu}

\date{\today}

\thanks{The work of C.J.\  is partially supported by a Simons collaboration grant. }
\thanks{The work of M.B.\  is partially supported by a Simons collaboration grant.}

\begin{document}

\begin{abstract}
Let $S$ be a closed topological surface. Haupt's theorem provides necessary and sufficient conditions for a complex-valued character of the first integer
homology group of $S$ to be realized by integration against
a complex-valued 1-form  that is holomorphic with respect to some
complex structure on $S$. We prove a refinement of this theorem 
that takes into account the divisor data of the 1-form. 
\end{abstract}

\maketitle

\section{Introduction}

Let $S$ be an oriented connected topological 
surface without boundary having genus $g \geq 2$. 
We say that a character 
$\chi\colon H_1(S; \Zbb) \to \Cbb$ is {\em realized}
by a complex-valued 1-form $\omega$ if and only if for each integral 
cycle $\gamma$ we have $\int_{\gamma} \omega = \chi(\gamma)$. 
In this case, the image $\Lambda_{\chi}$ of $\chi$ is the set of {\em periods}
of $\omega$.

In 1920, O.\ Haupt \cite{Haupt} determined those characters 
that are realized by some
1-form that is holomorphic with respect to some 
complex structure on $S$. More recently, M. Kapovich \cite{Kapovich} 
rediscovered Haupt's characterization in the following form: 
A character $\chi$ is realized by a holomorphic
1-form $\omega$ if and only if 

\begin{enumerate}
\item its {\em area} $A(\chi):= \Im\sum \overline{\chi(a_i)}  \chi(b_i)$ 
is positive 
where $\{a_i,b_i\}$ is a symplectic basis of $H_1(S; \mathbb{Z})$, and 

\item  if $\Lambda_{\chi}$ is discrete,
then $\Lambda_{\chi}$ is a lattice and the induced homotopy class of maps 
from $S$ to the torus $\Cbb/\Lambda_{\chi}$  has degree 
$d_{\chi}$ strictly greater than 1.
\end{enumerate}
In addition, if $\Lambda_{\chi}$ is discrete, then the induced map 
is realized by a branched covering $p\colon S \to \Cbb/\Lambda_{\chi}$ 
and the pullback $p^*(dz)$ realizes $\chi$. 

In this note we provide a refinement of Haupt's theorem that involves
the {\em divisor data} of the 1-form. To be precise, let 
$Z(\omega)= \{z_1, z_2, \ldots, z_k\}$ be the set of zeros of 
a nontrivial holomorphic 
1-form $\omega$, and for each $i$ let $\alpha_i$ denote the multiplicity 
of the zero $z_i$.
The divisor data, $\alpha(\omega)$, is the unordered $n$-tuple  
$(\alpha_1, \ldots, \alpha_k)$, whose sum is 
$\alpha_1 + \alpha_2 + \cdots + \alpha_k = 2g-2$.

\begin{thm} \label{thm:main}
A character $\chi: H_1(S, \Zbb) \to \Cbb$ is realized by a 1-form 
$\omega$ with divisor data $\alpha(\omega)= (\alpha_1, \ldots, \alpha_k)$
if and only if
\begin{enumerate}
\item[(1)] $A(\chi)$ is positive, and

\item[(2')]   if $\Lambda_{\chi}$ is discrete, then the induced map
$S \to \Cbb/ \Lambda_{\chi}$ has degree $d_{\chi} >\max{\{\alpha_i\}}$. 
\end{enumerate}
\end{thm}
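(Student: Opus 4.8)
The plan is to prove the two implications separately: necessity is immediate, and sufficiency is reduced, via Haupt's theorem and the standard surgery calculus for abelian differentials, to the problem of adjusting the divisor data of an already-realizing form.

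\emph{Necessity.} Positivity of $A(\chi)$ is automatic: if $\omega$ realizes $\chi$ on the Riemann surface $X$ then $A(\chi)=\tfrac{i}{2}\int_X\omega\wedge\overline\omega>0$. For (2'), suppose $\Lambda_\chi$ is discrete; since $A(\chi)>0$ it cannot have rank $\le 1$, so it is a lattice $\Lambda$ and $E:=\Cbb/\Lambda$ is a torus. As recalled in the statement of Haupt's theorem, a primitive of $\omega$ on the universal cover descends to a holomorphic branched cover $p\colon X\to E$ with $p^*(dz)=\omega$, and since $p_*=\chi$ on $H_1$ its degree equals the degree $d_\chi$ of the induced map. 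The zeros of $\omega=p^*(dz)$ are exactly the ramification points of $p$, and at $z_i$ one has $\alpha_i=e_i-1$ where $e_i\le\deg p=d_\chi$ is the ramification index; hence $\alpha_i\le d_\chi-1$, that is $d_\chi>\max\{\alpha_i\}$.

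\emph{Sufficiency, reduction.} Assume (1) and (2'). By Haupt's theorem $\chi$ is realized by some holomorphic $1$-form $\omega_0$, of some divisor data $\beta$; the plan is to modify $\omega_0$, keeping all periods (hence $\chi$) fixed, until its divisor data becomes $\alpha$. The basic move is the local \emph{breaking-up} surgery at a zero of order $n$: without changing the complex structure or the periods outside a small disc, it replaces that zero by zeros of any prescribed orders summing to $n$, introducing only short saddle connections among them. Its inverse collapses a saddle connection between two distinct zeros of orders $a,b$ to a single zero of order $a+b$, again without changing the periods, provided such a saddle connection is present. When $\Lambda_\chi$ is not discrete, condition (2') is vacuous, so one may freely collapse saddle connections to reduce $\omega_0$ to a form in the minimal stratum $\Hcal(2g-2)$ and then break up that single zero into orders $\alpha_1,\dots,\alpha_k$; the only point needing care is that the collapsing surgeries can be carried out, i.e.\ that inside the fibre of the period map through the current surface one can always find a representative carrying a saddle connection between two prescribed zeros, which I expect to follow routinely from the flexibility of absolute-period (isoperiodic) deformations.

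\emph{The discrete case and the main obstacle.} When $\Lambda_\chi=\Lambda$ is a lattice, Haupt's theorem lets us take $\omega_0=p^*(dz)$ for a connected degree-$d_\chi$ branched cover $p\colon S\to E=\Cbb/\Lambda$ with $p_*=\chi$, and $p^*(dz)$ has divisor data $(\alpha_1,\dots,\alpha_k)$ exactly when the ramification indices of $p$ are $\{\alpha_i+1\}_{i=1}^k$. So it suffices to deform $p$ through connected branched covers of $E$, keeping the degree $d_\chi$ and the class $p_*=\chi$, until its ramification profile is $\{\alpha_i+1\}$; then $\omega:=p^*(dz)$ is the desired form (with the complex structure pulled back from $E$). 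The relevant deformations — moving, creating, merging and braiding branch points — are supported away from a subsurface carrying $H_1(S)$ and are induced by isotopies of $E$ homotopic to the identity, hence preserve $p_*$; and from $p$ one may first split every ramification point into simple ones and then re-merge them into cycles of lengths $\alpha_1+1,\dots,\alpha_k+1$. This forces each target length $\alpha_i+1$, and every intermediate ramification index, to be at most $d_\chi$, which is precisely hypothesis (2'): $d_\chi>\max\{\alpha_i\}$. The main obstacle I foresee is making this rigorous — a Hurwitz-type connectivity statement for branched covers of the torus with fixed homology class and ramification indices bounded by the degree — and it is exactly here that $d_\chi>\max\{\alpha_i\}$ is used, dually to the way it appeared in the necessity argument.
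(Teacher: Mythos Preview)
Your necessity argument is correct and essentially the paper's. The sufficiency sketch, however, has two real gaps that you flag but significantly underestimate.

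In the non-discrete case, the breaking-up surgery is fine, but the collapsing step is not routine. Merging two zeros while fixing absolute periods means flowing along the isoperiodic (Rel) leaf until a prescribed relative period vanishes, and this flow is not complete: other saddle connections can shrink first, or the trajectory can escape every compact set. The assertion that the leaf through $\omega_0$ always reaches the minimal stratum is essentially the special case $\alpha=(2g-2)$ of the theorem you are trying to prove, and the paper establishes it (Theorem~\ref{thm:intersect-each-stratum}) only by invoking the Calsamiglia--Deroin--Francaviglia classification of isoperiodic leaf closures together with a transversality argument (Proposition~\ref{prop:intersect}). ``Flexibility of isoperiodic deformations'' is not a substitute for this input; already in genus~$2$ there are leaves (in the eigenform loci $\Omega_1 E_D$) whose intersection with $\Omega\Mcal_2(2)$ is delicate and is handled in the paper via McMullen's description of the Weierstrass curves inside Hilbert modular surfaces.

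In the lattice case, your proposed Hurwitz-type statement---that braiding and re-merging branch points of a fixed degree-$d$ primitive cover of $E$ realizes every ramification profile with indices $\le d$ while preserving $p_*$ on $H_1$---is plausible but not standard, and you give no argument for it. Colliding simple branch points yields an $(\alpha_i{+}1)$-cycle only when the underlying transpositions chain correctly, and arranging this while keeping the cover connected and primitive is precisely the combinatorial work the paper carries out by hand: \S\ref{section:lattice} explicitly constructs, for every $d>\max\{\alpha_i\}$ and every connected component of every stratum, a primitive degree-$d$ square-tiled surface (Proposition~\ref{prop:covering}). Your outline thus replaces both substantive inputs of the paper---the CDF leaf-closure theorem in the non-lattice case and the explicit square-tiled constructions in the lattice case---by steps that remain open.
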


The proof of the sufficiency is immediate. Indeed, one
applies Haupt's theorem and notes that
the Riemann-Hurwitz formula 
shows that the degree of an induced branched covering 
is at least $1+\max{\{\alpha_i\}}$.

To prove the necessity, we will recast the problem in terms
of the moduli space theory of 1-forms  (see \S  \ref{sec:leaves}).
The Hodge bundle $\Omega \Mcal_g$ is the moduli space  
of complex-valued 1-forms that are holomorphic with respect 
to some complex structure on $S$.  It is a disjoint union of the strata $\Omega \Mcal_g(\alpha)$, consisting of forms with divisor data $\alpha$.
A connected component of the set of 1-forms that have a prescribed set of periods 
constitutes a leaf of the `isoperiodic foliation'. 
Calsamiglia, Deroin, and Francaviglia \cite{CDF}
classified the closures of the leaves of the 
isoperiodic foliation. We use this classification
to prove the following.

\begin{thm} 
\label{thm:intersect-each-stratum}
If $L$ is an isoperiodic leaf whose associated set of periods is
not a lattice, then $L$ intersects each connected component of 
each stratum of the Hodge bundle.
\end{thm}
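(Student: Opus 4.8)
The plan is to derive the statement from the classification of leaf closures due to Calsamiglia, Deroin and Francaviglia together with a purely local surgery that collapses colliding zeros while staying inside $L$. First, since $L$ is nonempty its period character $\chi$ is realized by a $1$-form, so Haupt's theorem gives $A(\chi)>0$; in particular $\Lambda:=\Lambda_\chi$ is contained in no real line, hence spans $\Cbb$ over $\Rbb$, and then the hypothesis that $\Lambda$ is not a lattice forces $\Lambda$ to be non-discrete. The input I take from Calsamiglia, Deroin and Francaviglia is that in this case $\overline{L}=\Omega\Mcal_g$, i.e.\ a non-lattice leaf is dense in the entire Hodge bundle. (This is unambiguous: $\Mcal_g$ is connected and the fibres of $\Omega\Mcal_g\to\Mcal_g$ are connected, so $\Omega\Mcal_g$ is connected.)

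Now fix a connected component $C$ of a stratum $\Omega\Mcal_g(\alpha)$ with $\alpha=(\alpha_1,\dots,\alpha_k)$, and choose a form $\omega_0\in C$ whose zeros $z_1,\dots,z_k$ have orders $\alpha_1,\dots,\alpha_k$. I will fix small, pairwise disjoint, embedded flat disks $D_i\ni z_i$ and a neighbourhood $U$ of $\omega_0$ in $\Omega\Mcal_g$ small enough that $U\cap\Omega\Mcal_g(\alpha)$ is connected, hence contained in $C$. Since $\overline{L}=\Omega\Mcal_g$, there is a form $\omega_1\in L$ lying in $U$ and as close to $\omega_0$ as we wish. Because the zero divisor of a holomorphic $1$-form varies continuously with the form, once $\omega_1$ is close enough all zeros of $\omega_1$ lie in $\bigcup_i D_i$, the zeros inside a given $D_i$ have orders summing to $\alpha_i$, and these zeros are joined inside $D_i$ by saddle connections of $\omega_1$ of length $O(\varepsilon)$, where $\varepsilon\to 0$ as $\omega_1\to\omega_0$; I fix a spanning tree $T_i$ of such saddle connections in each $D_i$.

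The heart of the argument is to collapse all the trees $T_i$ at once, by continuously shrinking to $0$ the holonomy of every saddle connection in $\bigcup_i T_i$. In period coordinates this motion fixes every absolute period of $\omega_1$ (it moves only relative periods), so it is tangent to the isoperiodic foliation at every point of the path; since the path starts at $\omega_1\in L$, it remains inside the leaf $L$. Because each $T_i$ lies in the interior of the embedded disk $D_i$, the collapse pinches no homotopically essential curve, so it stays inside $\Omega\Mcal_g$; and for $\varepsilon$ small it stays inside $U$. At the end of the collapse the zeros inside each $D_i$ have merged into a single zero of order $\alpha_i$, so the resulting form $\omega_1'$ lies in $\Omega\Mcal_g(\alpha)\cap U\subseteq C$; being joined to $\omega_1$ by a path in $L$, it lies in $L$. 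Hence $L\cap C\neq\emptyset$, and as $C$ was an arbitrary component of an arbitrary stratum, the theorem follows.

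The one genuinely technical point is making the collapsing step precise: one must check that, for $\omega_1$ close enough to $\omega_0$, the short saddle connections in each $D_i$ can be shrunk all the way to length $0$ with the complex structure remaining in $\Mcal_g$ and the flat structure remaining in the stratum of $\omega_1$ until the final instant, and that the merged form lands in $\Omega\Mcal_g(\alpha)$ rather than in a neighbouring stratum. This is a standard local analysis of how strata degenerate under collision of zeros (splitting and collapsing of zeros, saddle-connection surgeries), but it needs care with the choice of the trees $T_i$ and with the orbifold structure of $\Omega\Mcal_g$ near forms with automorphisms. The global content --- that $L$ accumulates on every prescribed $\omega_0$ --- is furnished entirely by the density theorem of Calsamiglia, Deroin and Francaviglia.
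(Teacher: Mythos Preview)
There is a genuine gap in your use of the Calsamiglia--Deroin--Francaviglia classification. You assert that when $\Lambda=\Lambda_L$ is not a lattice, CDF gives $\overline{L}=\Omega\Mcal_g$, but this holds only when $\overline{\Lambda}=\Cbb$. Since $\Lambda$ is a finitely generated subgroup of $\Cbb$ spanning $\Cbb$ over $\Rbb$, the remaining non-lattice possibility is $\overline{\Lambda}=\Rbb\cdot z_1\oplus\Zbb\cdot z_2$ for some $\Rbb$-basis $z_1,z_2$ of $\Cbb$ (for instance $\Lambda=\Zbb[\sqrt{2}]+i\Zbb$). In that case CDF only yields $\overline{L}=\Omega_1^{\Gamma}\Mcal_g$ with $\Gamma=\overline{\Lambda}$, a \emph{proper} closed subset of $\Omega_1\Mcal_g$; a generic $\omega_0\in C$ does not lie in it, and your density step fails. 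The paper closes this gap by arguing from the stratum side: since the absolute period map restricted to the component $C$ is a submersion (hence open), one can find a unit-area $(X,\omega)\in C$ whose periods lie in $\Qbb\cdot z_1\oplus\Qbb\cdot z_2$; these form a lattice, and a suitable $A\in SL_2(\Rbb)$ carries them into $\overline{\Lambda}$, so that $A\cdot(X,\omega)\in\overline{L}\cap C$. Only then does one pass from $\overline{L}$ to $L$.

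You also omit the case $g=2$ with $L$ contained in the eigenform locus $\Ecal$, where CDF gives no density statement at all. The paper treats this separately via McMullen's description of each $\Omega_1 E_D$ as a circle bundle over a Hilbert modular surface, in which the isoperiodic leaves are vertical $\{c\}\times\mathbb{H}$ slices and each component of the Weierstrass curve $\Omega_1 W_D=\Omega_1 E_D\cap\Omega_1\Mcal_2(2)$ is the graph of a holomorphic map $\mathbb{H}\to\mathbb{H}$, hence meets every leaf.

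Your collapsing-of-zeros surgery is, in spirit, the same mechanism the paper isolates as Proposition~\ref{prop:intersect} (every leaf meeting a suitable neighbourhood of a stratum component meets the component itself). The paper's formulation is cleaner: because the absolute period map is a submersion on each stratum as well as on $\Omega^*\Tcal_g$, the implicit function theorem gives a local product chart $B^{2g-3}\times B^{2g}$ in which leaves are horizontal slices and the stratum surjects onto an open set in the second factor. This bypasses the saddle-connection and tree-collapse bookkeeping you yourself flag as needing care.
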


To prove Theorem \ref{thm:main}, one combines Theorem 
\ref{thm:intersect-each-stratum} with the following proposition.

\begin{prop} 
\label{prop:covering}
For any connected component $K$ of a stratum $\Omega \Mcal_g(\beta)$ of $\Omega \Mcal_g$
and for each integer $d > \max{\{\beta_k\}}$, 
there exists a primitive degree $d$ branched covering 
$p\colon  S \to \Cbb/ (\Zbb+ i \Zbb)$ such that $(S,p^*(dz))$ belongs to $K$.
\end{prop}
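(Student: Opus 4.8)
The plan is to build the branched covering $p\colon S \to \Cbb/(\Zbb + i\Zbb)$ explicitly by gluing $d$ unit squares (copies of a fundamental domain for the torus $E = \Cbb/(\Zbb + i\Zbb)$) along their edges according to a combinatorial pattern, so that the resulting translation surface has exactly the prescribed cone angles. Concretely, a degree $d$ branched cover of $E$ is encoded by a pair of permutations $\sigma, \tau \in \mathrm{Sym}(d)$ — $\sigma$ records how the $d$ squares are glued in the horizontal direction and $\tau$ in the vertical direction — and the branch points over $0 \in E$ correspond to the cycles of the commutator $[\sigma,\tau]$, with a cycle of length $\ell$ giving a zero of order $\ell - 1$ (equivalently a cone point of angle $2\pi\ell$). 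The covering is connected precisely when $\langle \sigma, \tau \rangle$ acts transitively on $\{1,\dots,d\}$, and it is primitive (no nontrivial intermediate cover) precisely when $\langle \sigma,\tau\rangle$ acts primitively. So the proposition reduces to the following purely group-theoretic statement: \emph{given $d > \max_k \beta_k$ and a partition $\beta = (\beta_1,\dots,\beta_m)$ of $2g-2$, there exist $\sigma, \tau \in \mathrm{Sym}(d)$ generating a primitive (in particular transitive) subgroup such that $[\sigma,\tau]$ has cycle type $(\beta_1 + 1, \dots, \beta_m + 1, 1, 1, \dots, 1)$, the number of fixed points being $d - \sum_k(\beta_k+1)$.} One must also check that one lands in the \emph{prescribed} connected component $K$ of $\Omega\Mcal_g(\beta)$ when the stratum is disconnected; by the Kontsevich–Zorich classification the components are distinguished by hyperellipticity and (in even genus-of-zero cases) by a spin/Arf invariant, so I would either tune the construction to each case or invoke known square-tiled representatives of each component and then increase the degree.

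First I would dispose of the transitivity/primitivity bookkeeping by a standard trick: fix any solution $(\sigma_0,\tau_0)$ on $\{1,\dots,d_0\}$ realizing the commutator cycle type with all of $\beta$'s parts "active" (e.g. $d_0 = \sum_k(\beta_k+1)$, or the smallest degree for which a connected square-tiled surface in the right stratum exists — this is classical, as strata of translation surfaces always contain square-tiled surfaces), and then, to raise the degree to an arbitrary $d > \max_k \beta_k$ with $d \geq d_0$, attach extra squares in a "ladder" or "cylinder" that contributes only fixed points to the commutator — for instance insert a horizontal cylinder of $d - d_0$ squares glued cyclically in the horizontal direction and trivially (column by column) in the vertical direction, and then connect it to the existing surface by modifying $\sigma$ along a single orbit, a move that creates at most one new commutator cycle which one arranges to have length $1$. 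Transitivity of the enlarged pair is automatic from the construction; primitivity I would get by choosing the gluing so that the point stabilizer is a maximal subgroup, or more cheaply by arranging that $\langle \sigma, \tau\rangle$ contains a $d$-cycle and a transposition (if $\gcd$ conditions allow) or an element with a single fixed point, invoking Jordan's theorem that a primitive group containing a $p$-cycle for a prime $p \le d-3$ is $A_d$ or $S_d$; in the remaining small/exceptional degrees a direct check suffices.

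The main obstacle I anticipate is the degenerate regime where $d$ is only slightly larger than $\max_k \beta_k$, so there is very little room: the commutator must have a cycle of length $\max_k\beta_k + 1$ which is close to $d$, leaving almost no "spare" points to guarantee primitivity or to separate the other zeros, and one cannot simply pad with a large cylinder. Here I would argue more carefully, possibly allowing one cycle of the commutator to "absorb" a free point or splitting the construction into cases according to whether $d = \max_k\beta_k + 1$ (so $p$ is a cyclic-type cover near that zero) or $d \geq \max_k\beta_k + 2$. A secondary subtlety is matching the specific component $K$: since the Arf invariant of a square-tiled surface is computable from the permutation data, hitting the correct spin will require either an explicit parity computation for the gluing scheme above or a reduction to a known generator of that component followed by the degree-raising move — and one must check that the degree-raising move preserves hyperellipticity-status appropriately (it generically destroys hyperellipticity, which is what we want when $K$ is a non-hyperelliptic component, and one treats the hyperelliptic components by a separate symmetric construction). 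I expect the cleanest write-up to separate "produce \emph{some} connected primitive cover in $K$ of \emph{some} admissible degree" (soft, from existing structure theory) from "promote it to \emph{every} degree $> \max_k\beta_k$" (the explicit cylinder-insertion argument), with the degenerate near-minimal degrees handled by hand.
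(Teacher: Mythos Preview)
Your proposal has the right general shape---explicit square-tiled constructions plus component bookkeeping via the Kontsevich--Zorich invariants---but there is a genuine gap at exactly the point you flag as ``the main obstacle,'' and the paper's proof resolves it by running the reduction in the \emph{opposite} direction from yours.

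You propose to first land in the correct stratum $\Omega\Mcal_g(\beta)$ at some degree $d_0$ (e.g.\ $d_0=\sum_k(\beta_k+1)=2g-2+m$) and then \emph{raise the degree} by inserting cylinders. This handles all $d\ge d_0$, but for a stratum with many zeros $d_0$ can be far larger than $\max_k\beta_k+1$, and you offer no mechanism for the degrees in between; the sentence about ``absorbing a free point'' or ``splitting into cases'' does not constitute an argument. This is not a boundary case one can dismiss: it is precisely the content of the sharp bound $d>\max_k\beta_k$ in the proposition. The paper instead \emph{fixes the degree $d$ from the outset} and varies the stratum. One starts in the minimal stratum $\Omega\Mcal_{g'}(\alpha_n)$ of a \emph{lower} genus $g'$, where $\alpha_n=\max_k\beta_k$; there a degree $d$ primitive cover exists for every $d\ge \alpha_n+1$ by a direct staircase/L-shaped construction (done separately for the hyperelliptic, odd, and even components). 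One then performs surgeries that \emph{add zeros while preserving the degree}: cutting and regluing along vertical segments in cylinders of circumference $\ge 2$ adds a single zero of even order, and a variant adds a pair of odd-order zeros. A lemma tracks the change in spin parity under each surgery (adding a zero of order $2k$ flips parity iff $k$ is odd), so by choosing the initial minimal-stratum component correctly one lands in the prescribed component $K$. The hyperelliptic components of $\Omega\Mcal_g(2g-2)$ and $\Omega\Mcal_g(g-1,g-1)$ are handled by separate symmetric constructions, as you anticipated.

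A smaller point: your criterion for primitivity is wrong. In this paper ``primitive'' means the induced map $p_*\colon H_1(S;\Zbb)\to H_1(T;\Zbb)$ is surjective (equivalently, no intermediate \emph{torus} cover), not that the monodromy group acts primitively on the fiber. The homological condition is strictly weaker and is what one actually checks---typically by exhibiting a single horizontal and a single vertical saddle connection that map to the standard generators of $H_1(T;\Zbb)$. Invoking Jordan's theorem to force the monodromy to be $A_d$ or $S_d$ is unnecessary and would in fact fail in some of the small-degree cases you need.
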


Recall that 
a branched cover of a torus is primitive if the induced map on homology is surjective.

In \S \ref{sec:leaves}, we construct the Hodge bundle over Teichm\"uller space, 
define the isoperiodic foliation, recall the main result of 
\cite{CDF}, and prove Theorem \ref{thm:intersect-each-stratum}.
In \S \ref{section:lattice}, we prove 
Proposition \ref{prop:covering}.

Soon after we posted this paper on the arxiv,  Thomas Le Fils 
shared a preprint \cite{LFs20} containing his independent proof of Theorem 
\ref{thm:main}. His proof differs from ours in that it does not pass 
through Theorem \ref{thm:intersect-each-stratum} and instead uses 
a study of the mapping class group action on the space of characters
in the spirit of \cite{Kapovich}. We note that his paper 
does not consider 
connected components of strata.

We thank an anonymous referee for very helpful comments.


\section{The Hodge bundle and the isoperiodic foliation} \label{sec:leaves}

In this section we describe the Hodge 
bundle and the absolute and relative period mappings.
We define the isoperiodic foliation and show that 
each leaf that passes near a stratum must intersect the 
stratum. We use this to prove Theorem \ref{thm:intersect-each-stratum}.
Finally we prove Theorem \ref{thm:main} modulo the 
proof of Proposition \ref{prop:covering}.

We begin by describing the Hodge bundle as a bundle over 
Teichm\"uller space.
A \emph{marked Riemann surface} is a closed Riemann surface $X$ 
together with an orientation-preserving homeomorphism $f\colon S \to X$.  
Two marked surfaces $(f_1, X_1)$ and $(f_2, X_2)$ are considered to 
be equivalent if $f_2 \circ f_1^{-1}$ is isotopic to a conformal map.  
The set of equivalence classes of marked genus $g$ surfaces may be given 
the structure of a complex manifold homeomorphic to  
$\mathbb{C}^{3g-3}$ called the Teichm\"uller space $\Tcal_g$.  

The \emph{Hodge bundle} $\Omega \Tcal_g\to\Tcal_g$ is the (trivial)
vector bundle over $\Tcal_g$  whose fiber above $(f,X)$ 
consists of (equivalence classes of) holomorphic 
$1$-forms on $X$. In other words, $\Omega \Tcal_g$ 
is the space of triples $(f, X, \omega)$ up to natural equivalence. 
The total space of $\Omega \Tcal_g$ is naturally a complex manifold
of dimension $4g-3$.
The {\em absolute period map} $P\colon \Omega\Tcal_g\to H^1(S; \mathbb{C})$
is the holomorphic map that assigns to each triple
$(f, X, \omega)$ the cohomology class $f^*(\omega)$.  

Let $\Omega^*\Tcal_g\subset \Omega \Tcal_g$ denote the set of one-forms
that do not vanish identically.  The map that assigns divisor data to each
$1$-form defines a stratification of $\Omega^* \Tcal_g$.  In
particular, for each partition $\alpha = (\alpha_1, \ldots, \alpha_k)$ of
$2g-2$, we define the {\em stratum} $\Omega\Tcal_g(\alpha)$ to consist of 
those triples $(f, X, \omega)$ such that the
divisor data of $\omega$ equals $\alpha$.

One may also define a relative period map in a neighborhood
of each non-trivial marked one-form $(f_0, X_0, \omega_0)$ in the stratum 
$\Omega \Tcal_g(\alpha)$.  Let $Z\subset S$ be a set of $k$ marked points.  
Over a contractible  neighborhood $U \subset \Omega \Tcal_g(\alpha)$
of $(f_0, X_0, \omega_0)$, one may choose representative marking maps to identify $Z$ with the zero sets
$Z(\omega)$.  Pulling back by these marking maps the class $[\omega]\ \in H^1(X, Z(\omega);
\mathbb{C})$ then defines the \emph{relative period map} 
$P_\mathrm{rel}\colon U\to H^1(S, Z; \mathbb{C})$.

The relative period map is well-known to be a local biholomorphism \cite{Veech}.
Moreover, the relative and absolute period
maps are related by $P|_{U} = r \circ P_{\mathrm{rel}}$   
where $r$ is the natural map from $H^1(S, Z; \Cbb)$ to $H^1(S; \Cbb)$.
By considering the long exact sequence in cohomology, one finds that 
$r$ is surjective, and hence $P|_U$ is a submersion. Since 
every non-trivial one-form lies in some stratum, we have the following.

 \begin{lem}
    \label{lem:period_submersion}
     The restriction of the absolute period map $P$ to $\Omega^* \Tcal_g$
     is a submersion, as is its restriction to any stratum 
     in $ \Omega\Tcal_g$.
 \end{lem}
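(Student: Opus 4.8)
The plan is to establish Lemma~\ref{lem:period_submersion} by reducing the statement about $\Omega^*\Tcal_g$ to the statement about strata, and then proving the latter by the local comparison between the absolute and relative period maps that has just been set up. Since $\Omega^*\Tcal_g$ is covered by the strata $\Omega\Tcal_g(\alpha)$ and the property of being a submersion is pointwise, it suffices to show that $P$ restricted to any single stratum is a submersion at each of its points; a map on $\Omega^*\Tcal_g$ is a submersion at $x$ as soon as its restriction to some submanifold through $x$ is a submersion there.

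First I would fix a nontrivial marked one-form $(f_0,X_0,\omega_0)$ lying in the stratum $\Omega\Tcal_g(\alpha)$, choose a contractible neighborhood $U\subset\Omega\Tcal_g(\alpha)$ as in the construction above, and write down the relative period map $P_{\mathrm{rel}}\colon U\to H^1(S,Z;\Cbb)$. The key input, due to Veech \cite{Veech}, is that $P_{\mathrm{rel}}$ is a local biholomorphism; in particular it is a submersion at $(f_0,X_0,\omega_0)$. Next I would invoke the factorization $P|_U = r\circ P_{\mathrm{rel}}$, where $r\colon H^1(S,Z;\Cbb)\to H^1(S;\Cbb)$ is the natural restriction map. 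A composition of submersions is a submersion, so the only remaining point is that $r$ itself is a submersion, i.e.\ surjective and linear (it is linear automatically, being induced by a map of cochain complexes). Surjectivity of $r$ follows from the long exact sequence of the pair $(S,Z)$ in cohomology: the term to the right of $H^1(S,Z;\Cbb)\to H^1(S;\Cbb)$ is $H^1(Z;\Cbb)$, which vanishes because $Z$ is a finite set of points. Hence $r$ is onto, $P|_U$ is a submersion at $(f_0,X_0,\omega_0)$, and since this point and stratum were arbitrary, the restriction of $P$ to every stratum is a submersion.

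Finally, to conclude the statement for $\Omega^*\Tcal_g$, I would note that through any point $x\in\Omega^*\Tcal_g$ there passes a stratum $\Omega\Tcal_g(\alpha)$ (the one determined by the divisor data of the form at $x$), which is a submanifold of $\Omega^*\Tcal_g$; the differential $d_xP$ is surjective already on the tangent space of this submanifold by the previous paragraph, hence surjective on the full tangent space. Therefore $P|_{\Omega^*\Tcal_g}$ is a submersion. There is no real obstacle here: the entire argument is a packaging of two facts already recorded in the text—Veech's local biholomorphism property of $P_{\mathrm{rel}}$ and the factorization $P|_U=r\circ P_{\mathrm{rel}}$—together with the triviality that $H^1$ of a finite point set vanishes. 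The only point requiring a word of care is the passage from ``submersion on each stratum'' to ``submersion on $\Omega^*\Tcal_g$,'' which is immediate once one observes that surjectivity of a linear map restricted to a subspace implies surjectivity on the whole space.
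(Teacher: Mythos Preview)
Your proposal is correct and follows essentially the same approach as the paper: factor $P|_U = r\circ P_{\mathrm{rel}}$, use Veech's result that $P_{\mathrm{rel}}$ is a local biholomorphism, check surjectivity of $r$ via the long exact sequence, and then pass from strata to $\Omega^*\Tcal_g$ by noting that every nontrivial form lies in some stratum. You have simply spelled out a few details (the vanishing of $H^1(Z;\Cbb)$ and the ``surjective on a subspace implies surjective'' step) that the paper leaves implicit.
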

 
 Since $P$ is a submersion, it defines a holomorphic 
 foliation of $\Omega^*\Tcal_g$ called the 
 {\em isoperiodic (or Rel) foliation}. 
 Each {\em isoperiodic leaf} is a connected
 component of a level set of $P$. 
 
The mapping class group ${\rm Mod}(S)$ naturally 
acts biholomorphically and properly discontinuously on the Hodge bundle. 
The quotient of this action is the classical Hodge bundle
$\Omega \Mcal_g \to \Mcal_g$ where the base $\Mcal_g$ is
the moduli space of Riemann surfaces.
In particular, each point in $\Omega \Mcal_g$ may be regarded as (the
equivalence class of) a pair $(X, \omega)$ where $X$ is 
a Riemann surface and $\omega$ is a holomorphic 1-form on $X$.

If $\varphi \in {\rm Mod}(S)$ then we have 
$P(\varphi^*(\omega))=\varphi^*(P(\omega))$. 
It follows that the isoperiodic foliation descends to a 
foliation of $\Omega \Mcal_g$ that
we will also refer to as the isoperiodic foliation.
Moreover, we have a well-defined map 
from the set of leaves  to the orbit space $H^1(S; \Cbb)/ {\rm Mod}(S)$,
and the set of periods 
$\Lambda_L:=\left\{\int_{\gamma} \omega\, :\, \gamma \in H_1(S, \Zbb) \right\}$
depends only on the isoperiodic leaf $L$ to which $\omega$ belongs.

Each stratum $\Omega \Tcal_g(\alpha)$ is invariant under the action
of ${\rm Mod}(S)$. Each quotient, 
$\Omega \Mcal_g(\alpha):= \Omega \Tcal_g(\alpha)/{\rm Mod}(S)$,
is the {\em stratum} that 
consists of pairs $(X, \omega)$ with divisor data $\alpha$.

\begin{prop}  \label{prop:intersect}
    Let $K$ be a connected component of a stratum. 
    There exists a neighborhood $Z \subset \Omega \Mcal_g$ 
    of $K$ such that if an isoperiodic leaf $L$ intersects
    $Z$, then $L$ also intersects $K$. 
\end{prop}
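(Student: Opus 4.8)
The plan is to show that the stratum $\Omega\Mcal_g(\alpha)$ is locally "thick" along the period map, in the sense that it surjects onto a neighborhood in $H^1(S;\Cbb)$, and then use the fact that leaves are connected components of fibers of $P$ together with a compactness/properness argument to pass from "close to $K$" to "meets $K$". First I would work upstairs in $\Omega\Tcal_g$, choosing a point $(f_0,X_0,\omega_0)\in\Omega\Tcal_g(\alpha)$ lying over $K$. By Lemma \ref{lem:period_submersion}, $P$ is a submersion at this point, so locally $P$ looks like a projection $\Cbb^{4g-3}\to\Cbb^{2g}$, and the stratum, being the image of the local biholomorphism $P_{\mathrm{rel}}$ composed into a submersion, also maps submersively. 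Thus for every point of $\Omega\Tcal_g(\alpha)$ there is a small polydisc neighborhood $W$ such that $P(W)$ is open and $P$ restricted to $W$ is a trivial fibration; in particular $W$ meets every leaf passing through $P(W)$, and it meets the stratum-slices of those leaves in $W$.

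The next step is to globalize this over the component $K$. I would cover $K$ by countably many such polydiscs $W_j$ (using that $\Omega\Mcal_g$ is second countable), and set $Z = \bigcup_j W_j$, a neighborhood of $K$ in $\Omega\Mcal_g$. Now suppose a leaf $L$ meets $Z$, say at a point $\xi \in W_{j}$. The point $\xi$ need not itself lie in the stratum, but within $W_{j}$ the submersion structure of $P$ gives a local product $W_j \cong D \times D'$ with $P$ the projection to $D \subset H^1(S;\Cbb)$, and the intersection $W_j \cap K$ surjects onto an open subset $D_K \subseteq D$; shrinking $W_j$ from the start so that $W_j\cap K$ is itself a graph over $D$, we get $D_K = D$. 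Hence the fiber of $P$ through $\xi$, intersected with $W_j$, is a slice $\{p\}\times D'$ that contains a point of $K$. Since this slice is connected and lies in the single level set $P^{-1}(p)$, and $\xi$ lies in it, $\xi$ and that point of $K$ lie in the same connected component of $P^{-1}(p)$, i.e.\ the same leaf. Therefore $L$ meets $K$.

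The one genuine subtlety is the passage to connected components: being in the same fiber of $P$ is weaker than being on the same leaf, so I must ensure the local slice $\{p\}\times D'$ through $\xi$ is \emph{connected} and entirely contained in one leaf — this is exactly what the local product structure from the submersion gives, provided the polydiscs $W_j$ are chosen small enough that $P|_{W_j}$ is a genuine trivial fibration with connected fibers (a local chart in which $P$ is linear projection). A second point to be careful about is descending from $\Omega\Tcal_g$ to $\Omega\Mcal_g$: since $\mathrm{Mod}(S)$ acts properly discontinuously, the quotient map is a local biholomorphism away from orbifold points, and the isoperiodic foliation and the strata are $\mathrm{Mod}(S)$-invariant, so the local product pictures descend; at points with nontrivial stabilizer one works in an orbifold chart, which does not affect the argument since connectedness is preserved under the finite quotient. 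The main obstacle, then, is bookkeeping the local trivializations so that "close to $K$" lands inside one of the good polydiscs and the corresponding fiber-slice genuinely reaches $K$ — there is no hard analysis, only the careful setup of the neighborhood $Z$ as a union of product-structured charts.
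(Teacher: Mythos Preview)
Your proposal is correct and follows essentially the same route as the paper: use Lemma~\ref{lem:period_submersion} to get, at each point of the stratum upstairs in $\Omega\Tcal_g$, a product chart $B^{2g-3}\times B^{2g}$ in which $P$ is projection, observe that $P$ restricted to the stratum is also a submersion so its image is open, and let $Z$ be the union (pushed down to $\Omega\Mcal_g$) of the product charts restricted over that open image. The paper phrases the shrinking slightly differently---it restricts the \emph{base} factor to $V=P(U\cap\tK)$ and sets $W=\varphi(B^{2g-3}\times V)$, rather than asking that $W_j\cap K$ be a ``graph over $D$'' (note that for non-minimal strata $K$ has dimension strictly larger than $2g$, so it is not literally a graph, though your surjectivity statement just before is the correct formulation and is all that is needed).
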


\begin{proof}
Let $\tK$ be a connected component of the preimage of $K$ in $\Omega^* \Tcal_g$.  
By Lemma~\ref{lem:period_submersion}, 
the map $P$ is a holomorphic submersion from the $4g-3$ dimensional 
complex manifold $\Omega^* \Tcal_g$ onto the complex vector space
$H^1(S;\Cbb)$ which has dimension $2g$. Thus, given 
$(f,X, \omega)$, the inverse function 
theorem  provides an open ball $B^{2g-3} \subset \Cbb^{2g-3}$,
an open ball $B^{2g} \subset H^1(S; \Cbb)$, and a biholomorphism 
$\varphi$ from $B^{2g-3} \times B^{2g}$ onto a neighborhood $U$
of $(f,X, \omega)$ so that  $P \circ \varphi(z,w)= w$. 

Suppose that $(f,X, \omega)$ lies in $\tK$. 
Since the restriction of $P$ to $\tK$ is a submersion,
the image $V:=P(U \cap \tK)$ is open.
Note that $(P \circ \varphi)^{-1}(V)= B^{2g-3} \times V$. 
If $L$ is a connected component of $P^{-1}(\chi)$ that 
intersects $W:=\varphi(B^{2g-3} \times V)$, then 
$\chi \in V$ and $L \cap U = \varphi(B^{2g-3} \times \{\chi\})$. 
In particular, $L$ intersects $\tK$. 

The neighborhood $Z$ is constructed by taking the image in $\Omega \Mcal_g$ of the union 
of all such neighborhoods $W$ as $(f, X, \omega)$ varies over 
$\tK$. 
\end{proof}

Next, we describe the result of Casamiglia, Deroin, and 
Francaviglia \cite{CDF} that classifies the closures of leaves $L$
in terms of the associated set of periods $\Lambda_L$. 
    The  closure, $\overline{\Lambda}_{L}$ is a closed real Lie subgroup of 
$\Cbb \cong \Rbb^2$. Thus,  $\overline{\Lambda}_{L}$
is either  equal to $\Cbb$, is isomorphic to  $\Zbb \oplus \Rbb$, or is discrete.

Let $\Omega_1 \Mcal_g\subset \Omega\Mcal_g$ denote the locus of unit-area forms.  
Since the area functional 
$A(\omega)= \frac{i}{2}\int_S \omega \wedge \overline{\omega}$ depends only on absolute
periods, $\Omega_1 \Mcal_g$ is saturated by leaves of the isoperiodic foliation.

Given any closed subgroup $\Gamma\subset \mathbb{C}$, let
$\Omega_1^\Gamma \Mcal_g\subset \Omega_1\Mcal_g$ denote the
union of the leaves $L$ such that there exists 
a connected subgroup $\Gamma' \subset \Gamma$ with 
$\Gamma = \overline{\Lambda}_L + \Gamma'$.
If $\Gamma = \mathbb{C}$, then $\Omega_1^\Gamma\Mcal_g = \Omega_1 \Mcal_g$.  
If $\Gamma$ is isomorphic to $\Rbb + \sqrt{-1} \cdot \Zbb$,
then $L \subset \Omega_1^\Gamma\Mcal_g$ if either 
$\overline{\Lambda}_L = \Gamma$ 
or $\overline{\Lambda}_L$ 
is a discrete subgroup of $\Gamma$ with `primitive imaginary part'. 
If $\Gamma$ is discrete, then $\Omega_1^\Gamma\Mcal_g$ is nonempty only 
if $\Gamma$ has covolume $1/d$ for some integer $d>1$,
in which case $\Omega_1^\Gamma\Mcal_g$ is a closed isoperiodic leaf which 
parameterizes primitive degree $d$ branched covers of $\mathbb{C}/\Gamma$.

\begin{prop}
If $\Gamma$ is a lattice, then
the space $\Omega_1^\Gamma\Mcal_g$ is connected. 
\end{prop}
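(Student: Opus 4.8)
The plan is to show that $\Omega_1^\Gamma\Mcal_g$, which a priori is only a union of isoperiodic leaves, is in fact a \emph{single} leaf; since by definition every isoperiodic leaf is a connected component of a level set of the absolute period map $P$, this suffices. By the classification recalled above we may assume $\Gamma$ has covolume $1/d$ for some integer $d>1$, as otherwise $\Omega_1^\Gamma\Mcal_g$ is empty, hence connected. Because $\Gamma$ is a lattice its only connected subgroup is $\{0\}$, so the defining condition $\Gamma=\overline{\Lambda}_L+\Gamma'$ (for some connected $\Gamma'\subset\Gamma$) reduces to $\overline{\Lambda}_L=\Gamma$, i.e. $\Lambda_L=\Gamma$. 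Thus $\Omega_1^\Gamma\Mcal_g=\bigcup\{L : \Lambda_L=\Gamma\}$, and by the Calsamiglia--Deroin--Francaviglia classification the closure of each such leaf $L$ is all of $\Omega_1^\Gamma\Mcal_g$.

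The key step is then to verify that each leaf $L$ with $\Lambda_L=\Gamma$ is closed in $\Omega_1\Mcal_g$: granting this, $L=\overline{L}=\Omega_1^\Gamma\Mcal_g$, so all leaves in the union coincide and $\Omega_1^\Gamma\Mcal_g$ is a single connected leaf. To prove closedness I would take $(X_n,\omega_n)\in L$ with $(X_n,\omega_n)\to(X,\omega)$ in $\Omega_1\Mcal_g$. Since ${\rm Mod}(S)$ acts properly discontinuously on the Hodge bundle, after passing to a subsequence this lifts to a convergent sequence of marked one-forms $(f_n,X_n,\omega_n)\to(f,X,\omega)$. The absolute periods then converge, $P(f_n,X_n,\omega_n)\to P(f,X,\omega)=:\chi$ in $H^1(S;\Cbb)$; evaluating on a fixed basis of $H_1(S;\Zbb)$, each coordinate of $P(f_n,X_n,\omega_n)$ lies in the discrete group $\Gamma$, so the sequence of characters is eventually equal to $\chi$. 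Hence for large $n$ the points $(f_n,X_n,\omega_n)$ and $(f,X,\omega)$ lie in the common level set $P^{-1}(\chi)$, which is a complex submanifold since $P$ is a submersion on $\Omega^*\Tcal_g$ by Lemma~\ref{lem:period_submersion}; as the leaf through $(f,X,\omega)$ is open in this level set, $(f_n,X_n,\omega_n)$ lies on that leaf for large $n$. That $\Omega^*\Tcal_g$-leaf descends to the isoperiodic leaf of $(X_n,\omega_n)$, namely $L$, and it contains $(f,X,\omega)$; therefore $(X,\omega)\in L$.

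Combining the two steps, $\Omega_1^\Gamma\Mcal_g$ is a single isoperiodic leaf, hence connected. The only part of this that is not purely formal is the closedness step, and there the only delicate points are the bookkeeping of the lift to marked one-forms and the observation that a convergent sequence of characters valued in a fixed discrete group is eventually constant; everything else is immediate from the definitions, Lemma~\ref{lem:period_submersion}, and the cited classification. An alternative route that avoids invoking the closure statement would be to connect any two primitive degree~$d$ branched covers of $\Cbb/\Gamma$ directly inside $\Omega_1^\Gamma\Mcal_g$ by moving the branch points and allowing them to collide and split while tracking the monodromy, but this Hurwitz-theoretic argument is considerably longer, so I expect the route above to be preferable.
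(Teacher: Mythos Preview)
Your argument is circular. You invoke the Calsamiglia--Deroin--Francaviglia classification in the form $\overline{L}=\Omega_1^\Gamma\Mcal_g$, and then combine it with closedness of $L$ to get $L=\Omega_1^\Gamma\Mcal_g$. But in this paper that clean statement of CDF (Theorem~\ref{thm:CDF}) is presented \emph{after} the proposition and is explicitly obtained from it: ``Because $\Omega_1^{\Gamma}\Mcal_g$ is connected, we may simplify the statement of the main theorem of \cite{CDF}.'' So you are using the proposition to prove itself. Without the simplified form of CDF your closedness step only yields the tautology $\overline{L}=L$; it says nothing about the \emph{other} leaves with period group $\Gamma$, and that is exactly what the proposition is asserting. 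There is also a genus-two wrinkle: lattice-period leaves lie in the eigenform locus $\Ecal$, so even the stated CDF alternative does not give $\overline{L}=\Omega_1^\Gamma\Mcal_g$ in that case.

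Ironically, the paper takes precisely the route you dismiss as ``considerably longer.'' It cites Gabai--Kazez \cite{GabKaz}: any two primitive simply branched covers $p,q\colon S\to\Cbb/\Gamma$ of the same degree satisfy $k\circ p=q\circ h$ for some homeomorphism $h$ of $S$ and some $k$ isotopic to the identity on the torus; running the isotopy $k_t$ produces the path $t\mapsto (k_t\circ p)^*(dz)$ in $\Omega_1^\Gamma\Mcal_g$ joining $p^*(dz)$ to $q^*(dz)$, and density of simply branched covers finishes the argument. This is short, works uniformly in $g$, and is logically independent of CDF---which is the point, since the proposition is an input to the version of CDF you wanted to use.
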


\begin{proof}
By Theorem 9.2 of \cite{GabKaz},
given two primitive, simply branched coverings $p:S \to \Cbb/\Gamma$ 
and $q:S \to \Cbb/\Gamma$ of the same degree, there exists a homeomorphism 
$h: S \to S$ and a homeomorphism $k: \Cbb/\Gamma \to \Cbb/\Gamma$ 
isotopic to the identity so that $k \circ p = q \circ h$. 
Let $k_t$ be the isotopy with $k_0 = k$ and $k_1={\rm id}$.
For each $t$, the 1-form $(k_t \circ p)^*(dz)$ is holomorphic with 
respect to the pulled-back complex structure.
We have $(k_0 \circ p)^*(dz)= h^*\left( q^*(dz) \right)$
and $(k_1 \circ p)^*(dz)= p^*(dz)$. Hence the path 
in $\Omega_1^{\Gamma} \Mcal_g$ associated to $(k_t \circ p)^*(dz)$
joins the point represented by 
$q^*(dz)$ to the point represented by $p^*(dz)$.
Since simply branched coverings are generic, 
the space $\Omega_1^{\Gamma} \Mcal_g$ is connected.
\end{proof}

Because $\Omega_1^{\Gamma} \Mcal_g$ is connected, we may simplify 
the statement of the main theorem of \cite{CDF}.

\begin{thm}[\cite{CDF}] \label{thm:CDF}
Let $L\subset \Omega_1 \Mcal_g$ be a leaf of the 
isoperiodic foliation and let $\Gamma = \overline{\Lambda}_L$.  
If $g>2$, then the closure of $L$ is $\Omega_1^\Gamma\Mcal_g$. 
If $g=2$, then either the closure
of $L$ is $\Omega_1^\Gamma\Mcal_g$ or $L$ lies in the eigenform
locus $\Ecal \subset \Omega_1 \Mcal_g$. 
\end{thm}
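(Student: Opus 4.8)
The plan is to deduce this statement directly from the main theorem of \cite{CDF}, using the connectedness proposition just proved to collapse the description of the leaf closure in the discrete case. First I would recall the precise formulation in \cite{CDF}: there the closure of a leaf $L$ in $\Omega_1 \Mcal_g$ is described, for $g > 2$, as a union of pieces each of which is a $\Omega_1^{\Gamma'}\Mcal_g$-type locus attached to subgroups $\Gamma'$ with $\overline{\Lambda}_L + \Gamma' = \Gamma$ (so that, up to reorganizing that union, one recovers exactly the set $\Omega_1^\Gamma\Mcal_g$ as defined in this section), while for $g = 2$ the same conclusion holds unless $L$ lies in the eigenform locus $\Ecal$, where the real-multiplication structure obstructs the transfer argument. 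Thus the task reduces to checking, case by case on the three possibilities for $\Gamma = \overline{\Lambda}_L$, that the set $\Omega_1^\Gamma\Mcal_g$ defined above coincides with the closure as furnished by \cite{CDF}.

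In the case $\Gamma = \Cbb$ there is nothing to do: by definition $\Omega_1^\Gamma\Mcal_g = \Omega_1\Mcal_g$, and \cite{CDF} asserts density of every non-exceptional leaf whose periods are dense. In the case $\Gamma \cong \Rbb + \sqrt{-1}\cdot\Zbb$, the definition recalled above already lists precisely the two families of leaves — those with $\overline{\Lambda}_L = \Gamma$, and those with $\overline{\Lambda}_L$ a discrete subgroup of $\Gamma$ of primitive imaginary part — that \cite{CDF} shows share the common closure, so the two descriptions again agree. The only case with genuine content is $\Gamma$ a lattice: then, by the covolume constraint recalled above, $\Omega_1^\Gamma\Mcal_g$ is the locus of primitive degree-$d$ branched covers of $\Cbb/\Gamma$, and \cite{CDF} identifies $\overline{L}$ with a union of such loci indexed by combinatorial (torsion) data. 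Here the preceding proposition intervenes: since $\Omega_1^\Gamma\Mcal_g$ is connected and is itself a closed leaf, while $L \subseteq \Omega_1^\Gamma\Mcal_g$ is a leaf, we get $\overline{L} = L = \Omega_1^\Gamma\Mcal_g$, so the combinatorial index set collapses to a single element and the statement follows.

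The genuinely hard input — the proof that a leaf with non-discrete period group is dense in $\Omega_1^\Gamma\Mcal_g$, together with the identification of the $g = 2$ exceptional behavior with the eigenform locus — is entirely absorbed into the citation of \cite{CDF} and is not something I would reprove here; that ergodic/transfer-theoretic argument is the substance of their paper. Everything added in the present note is bookkeeping: matching the two formulations of $\Omega_1^\Gamma\Mcal_g$ and invoking connectedness to eliminate the spurious discrete components. The one point requiring mild care is the phrasing of the $g = 2$ alternative as an \emph{either/or} rather than a partition, since a priori a leaf could satisfy both horns; the formulation above is stated so as to remain correct in that event.
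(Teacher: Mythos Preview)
Your proposal is correct and matches the paper's approach: the paper does not give a proof of this theorem at all but simply cites \cite{CDF}, prefacing the statement with the remark that ``because $\Omega_1^{\Gamma}\Mcal_g$ is connected, we may simplify the statement of the main theorem of \cite{CDF}.'' Your write-up is exactly a fleshed-out version of that one-line justification, making explicit the three cases for $\Gamma$ and pinpointing the lattice case as the one where the connectedness proposition does the work of collapsing the description to a single leaf.
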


We are now ready to prove Theorem \ref{thm:intersect-each-stratum}. 

\begin{proof}[Proof of Theorem \ref{thm:intersect-each-stratum}]
  We first suppose that $g >2$ or $g=2$ and $L \not\subset \Ecal$.
  If $L$ is an isoperiodic leaf such that $\Lambda_L$
  is not a lattice, then $\oLambda_L$ either equals $\Cbb$ or 
  equals $\Rbb \cdot z_1 \oplus \Zbb \cdot z_2$ where $z_i \in \Cbb$.
  By Lemma \ref{lem:period_submersion} the restriction of the 
  absolute period map to a given component $K$ of a given stratum 
  is an open map. It follows that 
  there exists $(X, \omega) \in K$ of area 1 so that the periods of 
  $\omega$ lie in $\Qbb \cdot z_1 \oplus \Qbb \cdot z_2$. In particular, 
  the set of periods constitute a lattice and there exists 
  $A \in SL_2(\Rbb)$ so that the periods of $A \cdot (X, \omega)$
  lie in $\oLambda_L$.  Hence $A \cdot (X,\omega)$ lies in the closure
  $\oL$ by Theorem \ref{thm:CDF}. Thus $K$ intersects $\oL$, and hence $K$ 
  intersects $L$ by Proposition~\ref{prop:intersect}.
  
  It remains to consider the case where $g=2$ and $L \subset \Ecal$.
  In this case, Theorem \ref{thm:intersect-each-stratum} follows from work of McMullen
  \cite{McM03, McM05}. Indeed, $\Omega \Mcal_2$ consists of two strata, the principal stratum $\Omega \Mcal_2(1,1)$ and the stratum $\Omega \Mcal_2(2)$,
  and both of these strata are connected. 
  McMullen shows that the eigenform locus 
  $\mathcal{E}\subset \Omega_1 \Mcal_2$ is a countable union of orbifolds $\Omega_1 E_D$
  where $D$ belongs to a subset of the positive integers.  Moreover, 
  each $\Omega_1 E_D$ is saturated by leaves of the isoperiodic foliation.  
  The intersection  $\Omega_1 E_D \cap \Omega_1 \Mcal_2(2)$ is his ``Weierstrass curve''
  $\Omega_1 W_D$.   The eigenform locus $\Omega_1 E_D$ is a circle bundle over a Hilbert modular surface, which is covered by $\mathbb{H} \times \mathbb{H}$.  In this covering, the isoperiodic foliation is simply the ``vertical'' foliation with leaves $\{c\}\times \mathbb{H}$.   Each component of the Weierstrass curve is covered by a graph of a holomorphic function $\mathbb{H}\to\mathbb{H}$ which \emph{a fortiori} must intersect each vertical leaf, and hence  every isoperiodic leaf in $\Omega_1 E_D$ must intersect $\Omega_1 W_D$. 
    Finally, each $\Omega_1 W_D$ is nonempty unless $D=4$, 
  in which case $\Omega_1 E_4$ parameterizes degree $2$ torus-covers, 
  a case that is excluded by Theorem~\ref{thm:main}. 
\end{proof}

We remark that if $\Lambda_L$ is a lattice, then the 
associated space $\Omega_1^{\Lambda_L} \Mcal_2$ need not intersect every stratum
$\Omega \Mcal_2(\alpha)$.  Indeed, for such an intersection to be nonempty, 
it is necessary for the covolume of $\Lambda_L$ to be strictly 
less than $1/ \max{\alpha_i}$. Proposition \ref{prop:covering} below 
implies that this condition is also sufficient.

Finally, we prove our variant of Haupt's theorem modulo Proposition \ref{prop:covering}. 

\begin{proof}[Proof of Theorem \ref{thm:main}]
  Suppose that $\chi \in {\rm Hom}(H^1(S;\Zbb), \Cbb) \cong H^1(S;\Cbb)$ is a character which
  satisfies the hypotheses of Theorem~\ref{thm:main}.  By applying a real rescaling, we may assume
  moreover that $A(\chi)=1$.  Haupt's theorem then provides a unit-area holomorphic 1-form
  $(X,\omega)\in \Omega_1\Tcal_g$ representing $\chi$.  Let $L\subset \Omega_1 \Tcal_g$ be the
  isoperiodic leaf passing through $(X, \omega)$, with $\pi(L)$ its image in $\Omega_1 \Mcal_g$.

  If $\Lambda_L$ is not a lattice, then Theorem \ref{thm:intersect-each-stratum} implies that the
  leaf $\pi(L)$ intersects $\Omega \Mcal(\beta)$ for every $\beta$.  A form $(X', \omega')$ in this intersection is
  then a representative of $\chi$ in the desired stratum.

  If $\Lambda_L$ is a lattice, then  the image $\pi(L)$ of $L$ in
  $\Omega_1^{\Lambda_L}\Mcal_g$ is the space of primitive degree $d$ branched covers of
  $\mathbb{C}/\Lambda_L$.  Applying the $GL_2^+(\Rbb)$-action, we may assume that $\Lambda_L = \Zbb \oplus i \Zbb$. 
  Proposition 
 \ref{prop:covering} then implies that the leaf $\pi(L)$ intersects $\Omega \Mcal(\beta)$. 
\end{proof}


\section{Primitive torus covers}  \label{section:lattice}

In this section we complete the proof of Theorem \ref{thm:main} 
by proving Proposition~\ref{prop:covering}.  That is, for each connected component 
$K$ of a stratum $\Omega \Mcal(\alpha)$, we construct a primitive branched torus covering $p \colon S \to \Cbb/(\Zbb + i\Zbb)$ 
so that $p^*(dz)$ lies in $K$.

To prove Theorem \ref{thm:main}, we will explicitly construct torus coverings that 
lie in connected components of strata having one or two zeros, and 
then we apply a sequence of `surgeries' to obtain torus coverings with additional zeros. 
In \S \ref{subsec:minimal-strata} we construct torus
coverings for each connected component of each minimal stratum $\Omega \Mcal(2g-2)$.
In \S  \ref{subsec:two-zeros} we construct covers 
for each component of $\Omega \Mcal_g(g-1,g-1)$. 
In \S \ref{subsec:other} we introduce surgeries that add zeros to 
a torus cover while preserving the degree, and we check the effect 
of surgery on the spin parity. In \S \ref{subsec:highest-order-odd}
we construct torus covers such that the 1-form has exactly two zeros
and each zero has odd order. 
we use surgeries to construct torus covers when $\max \alpha_i$ is odd. 
In \S \ref{sec:algorithm} we describe the algorithm that can be used to 
construct a torus cover any desired connected component. 
We also provide some examples.

In what follows we will let $T$ denote the `unit square' torus
$\Cbb/(\Zbb + i \Zbb)$.

According to \cite{KoZo}, the connected components of 
strata are distinguished by hyperellipticity
and spin parity. To be precise, we
will need to determine whether a torus covering $p \colon S \to T$
admits a {\em hyperelliptic involution}, a holomorphic 
involution $\tau\colon S \to S$ such that  the quotient $S/\langle \tau \rangle$ 
is a sphere. 
Because $\tau^*(\omega)=-\omega$, a hyperelliptic involution maps each vertical 
(resp. horizontal) cylinder to a vertical (resp. horizontal) cylinder. 
Moreover, if $\tau$ preserves a vertical or horizontal cylinder $C$,
then $\tau$ preserves the central curve of the cylinder
and fixes exactly two points on the central curve. 
The Riemann-Hurwitz formula implies that $\tau$ has 
exactly $2g+2$ fixed points. 

We will also need to check the {\em spin parity} of a holomomorphic
1-form.
Given a Riemann surface $X$ with a holomorphic one-form $\omega$ and a 
loop $\gamma\colon S^1 \to X$ disjoint from the zeros of $\omega$, the Gauss map $G_\gamma\colon S^1 \to S^1$ is defined by 
\begin{equation*}
    G_{\gamma}(t)~ =~ \frac{\omega(\gamma'(t))}{|\omega(\gamma'(t))|}.
\end{equation*}
The {\em index of $\gamma$} is the degree of $G_\gamma$.
Note that if $\gamma$ is a geodesic with respect to the natural 
the flat structure on the surface, 
then $G_{\gamma}$ is a constant map and hence $\ind(\gamma)=0$.

Following Thurston and Johnson \cite{Johnson}, Kontsevich and Zorich \cite{KoZo} 
gave the following formula for the spin parity of a holomorphic 1-form $\omega$
all of whose zeros have even order. 
Given a symplectic basis  ${a_1, b_1, ..., a_g, b_g}$ 
for $H_1(X; \Zbb)$ consisting of curves that do not pass through a zero,
the \emph{spin parity} of $\omega$ equals
\begin{equation}
\label{eqn:spin-parity}
\sum_{i=1}^g~ (\mathrm{ind}(a_i) + 1) (\mathrm{ind}(b_i) + 1) \pmod{2}.
\end{equation}
In particular, this invariant of a holomorphic 1-form with zeros of even order
lies in $\Zbb/2 \Zbb$. We refer to a 1-form as {\em even} 
if its spin parity equals 0 mod 2, and as {\em odd} otherwise.


\subsection{Minimal strata}  
\label{subsec:minimal-strata}

In this subsection, for each $d>2g-2$, we construct a degree $d$ 
primitive branched torus covering for each connected component 
of  the `minimal stratum' $\Omega \mathcal{M}_g(2g-2)$. 
For $g \geq 4$, the minimal stratum 
has exactly three connected components \cite{KoZo}: 

\begin{itemize}

\item {\em hyperelliptic:} 
      The 1-forms in $\Omega\Mcal_g(2g-2)$
      that are canonical double covers of 
      meromorphic quadratic differentials
      on the Riemann sphere with one zero of order $2g-2$ and $2g+1$
      simple poles.

\item {\em even:}  The non-hyperelliptic 1-forms with even spin parity.
     
\item {\em odd:}  The non-hyperelliptic 1-forms with odd spin parity.

\end{itemize}
Denote these components by $\Omega\mathcal{M}_g(2g-2)^{\text{hyp}}$,  $\Omega\mathcal{M}_g(2g-2)^{\text{odd}}$, and $\Omega \mathcal{M}_g(2g-2)^{\text{even}}$. 
In the case $g=3$, there is no even component,
and in the case $g=2$, there is only the hyperelliptic component \cite{KoZo}.

For each of the above connected components we will first construct 
a degree $2g-1$ primitive branched cover $p$ so that 
$p^*(dz)$ lies in the component. A slight modification
of the construction will provide primitive branched coverings of each degree $d>2g-2$.

For a torus covering to lie in the minimal stratum, it is necessary 
that it be branched over a single point. 
To describe such coverings, consider the unbranched covers of the 
punctured torus $\Cbb/((\Zbb+ i \Zbb) \setminus\{0\})$. 
Each such degree $d$ covering corresponds to a homomorphism  $\rho$
from the fundamental group of the once punctured torus
to the symmetric group on $d$ letters (the `monodromy representation'). 
The fundamental group of the once punctured torus is freely generated by 
the central curve $h$ of the horizontal cylinder and the central
curve $v$ of the vertical cylinder. It follows that 
each degree $d$ covering that is branched over $0$ is determined by 
$\rho(h)$ and $\rho(v)$. In sum, each branched covering is determined 
by a pair of permutations that we will denote $h$ and $v$ respectively. 
This description is unique up to simultaneous conjugation of $h$ and $v$.

There is a one-to-one correspondence between 
the zeros of $p^*(dz)$ and the nontrivial cycles of the commutator $[h,v]$. 
Each cycle of length 1 in $[h,v]$ 
corresponds to a point in the fiber above $[0]$ that
is not ramified. In particular, since in this section,
we wish to construct torus coverings with a single ramification point
of degree $2g-1$ we will need to check that $[h,v]$ has one 
cycle of length $2g-1$ and $d-(2g-1)$ cycles of length 1. 

Torus coverings branched over one point are often 
called {\em square-tiled surfaces}. 
Indeed, given a pair of permutations $h$, $v$ of $\{1,\ldots, d\}$,
we can construct the covering by gluing together $d$ disjoint unit squares 
labeled $1, \ldots, d$ as follows: Glue the right side of square $i$ to the 
left side of square $h(i)$ and the top of square $i$ to the bottom of square $v(i)$.  
Note that the group generated by $h$ and $v$ must act transitively on $\{1, 2, ..., d\}$ 
for the surface to be connected.


\subsubsection{The hyperelliptic component}

Let $p\colon  H_g \to T$ be the degree $d = 2 g-1$ torus covering branched 
over one point that is
defined by the following permutations on $2g-1$ letters (in cycle notation)
\begin{eqnarray*}
h  &=& (1,2)(3,4)~ \cdots~ (2g-3,2g-2)(2g-1) \\
v  &=& (1)(2,3)(4,5)~ \cdots~ (2g-2,2g-1).
\end{eqnarray*}
See Figure \ref{fig:H_g}.
The commutator $[h,v]$ has order $2g-1$ and so $p$ has only
one ramification point, and thus $p^*(dz)$ has exactly 
one zero $z$ of order $2g-2$. Hence each vertical edge (resp. 
horizontal edge) of each unit square is a 1-cycle in 
$H_1(H_g;\Zbb)$, and the covering map sends this 1-cycle
to the standard vertical (resp. horizontal) generator
of $H_1(\Cbb/\Zbb^2; \Zbb)$. Hence $p$ is primitive.

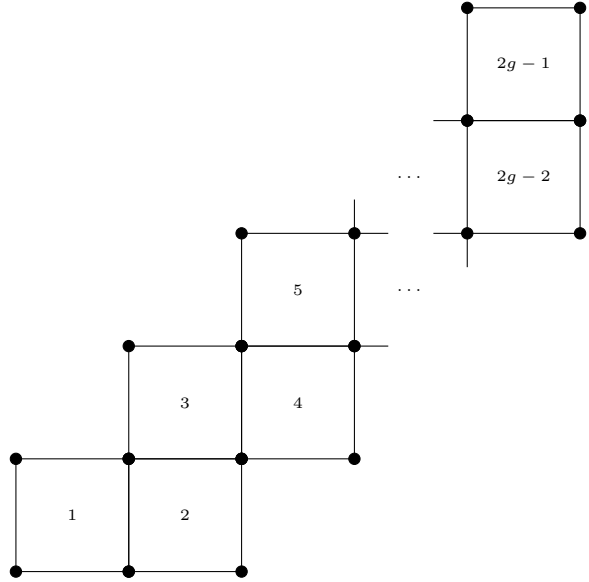
\begin{figure}[h!]
    \centering
    \begin{tikzpicture}[scale=\tikzscale,font=\tiny]
        \tile{0}{0}
        \node at (0.5,0.5) {$1$};
        \tile{1}{0}
        \node at (1.5,0.5) {$2$};
        \tile{1}{1}
        \node at (1.5,1.5) {$3$};
        \tile{2}{1}
        \node at (2.5,1.5) {$4$};
        \tile{2}{2}
        \node at (2.5,2.5) {$5$};
        \draw (3,2) -- (3.3,2);
        \draw (3.5,2.5) node{$\cdots$};
        \draw (3,3) -- (3.3,3);
        \draw (3,3) -- (3,3.3);
        \draw (3.5,3.5) node{$\cdots$};
        \draw (3.7,3) -- (4,3);
        \draw (4,2.7) -- (4,3);
        \tile{4}{3}
        \node at (4.5,3.5) {$2g-2$};
        \draw (3.7,4) -- (4,4);
        \tile{4}{4}
        \node at (4.5,4.5) {$2g-1$};
    \end{tikzpicture}
    
    \caption{The hyperelliptic torus cover, 
       $H_g$, in the minimal stratum that is a degree $2g-1$ primitive 
     branched covering of the torus.}
    \label{fig:H_g}

\end{figure}

A hyperelliptic involution $\tau$ of the surface $H_g$ can be constructed
by rotating each square in Figure \ref{fig:H_g} about its center by $\pi$
radians.
The involution $\tau$ has $2g+2$ fixed points  consisting
of the zero of $p^*(dz)$, the centers of 
each of the $2g-1$ squares, the midpoint of the top (and bottom) edge of 
square 1, and the midpoint of the left (and right) edge of square $2g-2$. 
The quotient $H_g/\langle \tau\rangle$ is a sphere 
and it follows that $p^*(dz)$ is hyperelliptic.

To construct primitive branched covers of degree $d>2g-1$, we lengthen 
one of the vertical cylinders by placing $d-(2g-1)$ additional squares
on top of the square $2g-1$ in Figure \ref{fig:H_g}.
To be precise, let
$p\colon  H_g^d \to \Cbb/\Zbb$ be the covering determined by the 
permutations
\begin{eqnarray*}
h &=& (1,2)(3,4)~ \cdots~ (2g-3,2g-2)(2g-1)(2g-2)~ \cdots~ (d-1)(d) \\
v  &=& (1)(2,3)(4,5)~ \cdots~ (2g-2,2g-1, \ldots, d-1,d ).
\end{eqnarray*}
The commutator $[h,v]$ has one cycle of length $2g-1$ and $d-(2g-1)$
cycles of length 1. In other words, $p^*(z)$ has a single zero of
order $2g-2$. The covering $p$ is primitive for the same reason that 
the covering $H_g \to T$ is primitive.
The surface $H_g^d$ admits a hyperelliptic involution $\tau$ which
rotates by $\pi$ each of the squares labeled $1$ through $2g - 2$ 
about their respective centers. The involution $\tau$ preserves the 
horizontal Euclidean cylinder $C$ consisting of the squares $2g-1, \ldots, d$,
and its restriction to $C$ has two fixed points. 
The only remaining fixed point of $\tau$ is the unique 
zero of $p^*(dz)$. 


\subsubsection{The odd component}
\label{subsubsec:odd}

Let $p\colon  O_g \to T$ be the degree $d = 2 g-1$ torus covering branched 
over one point that is
defined by the following permutations on $2g-1$ letters (in cycle notation)

\begin{eqnarray*}
h &=& (1,3,5, \ldots, 2g-1) \\
v  &=& (1,2)(3,4)~ \cdots~ (2g-3,2g-2)
\end{eqnarray*}
See Figure \ref{fig:O_g-square}.

\begin{figure} 
\includegraphics[scale=.8]{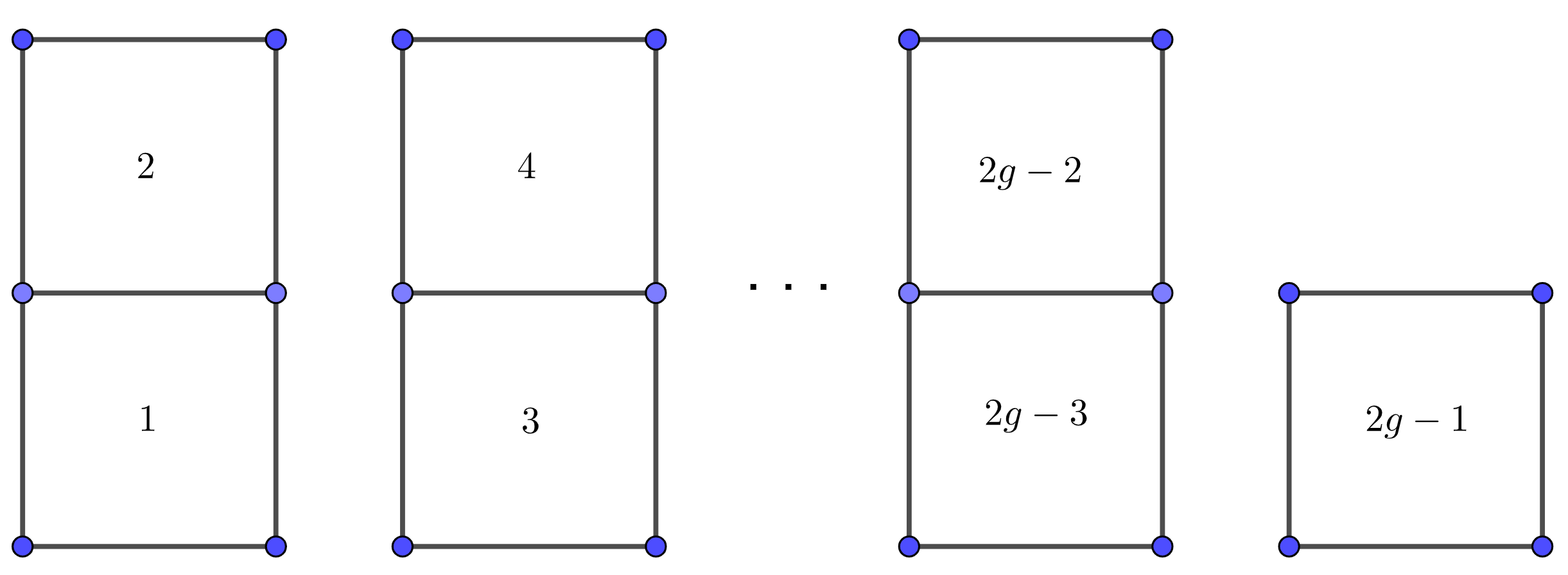}
\caption{The odd spin parity torus cover $O_g$ in the minimal 
stratum. The odd numbered squares together form a horizontal 
cylinder of length $g$ and each even numbered square
corresponds to a horizontal cylinder of length $1$.}

\label{fig:O_g-square}

\end{figure}

To see that the surface is not hyperelliptic, we suppose that $O_g$ 
admits a hyperelliptic involution $\tau$ and then derive 
a contradiction. The horizontal cylinder $C$ that consists of the odd
numbered squares is the only horizontal cylinder of length greater
than 1, and hence $\tau(C)=C$. In particular, the map $\tau$ 
preserves the union $V$ of the vertical saddle connections that are 
contained in $C$. The complement of $V$ consists of the 
vertical cylinder corresponding to the square labeled $2g-1$
and the $g-1$ slit tori $S_i$ corresponding to the cycles $(i, i+1)$ for $i$ odd. 
If $\tau(S_i)=S_j$ for some $i \neq j$, then the sphere $O_g/\langle \tau\rangle$
would contain a once holed torus. This is not possible, and so
$\tau(S_i)=S_i$ for each $i$. In particular, $\tau$ preserves 
each odd numbered square, and the center of each odd numbered square
is a fixed point. That is, $\tau$ has at least $g-1 >2$ fixed points.
But each (non-null homologous)
cylinder $C$ has exactly $2$ fixed points,
and this is the desired contradiction.

To see that the spin parity of $O_g$ is odd, we exhibit  
$O_g$ as the slit tori decomposition mentioned in the previous paragraph. 
See Figure \ref{fig:O_g-new-symplectic}. Here we have 
chosen a symplectic basis $\{a_i, b_i\}$ for the first homology
of $O_g$. The curve $a_g$ `turns' once as it traverses each slit torus
and hence has index equal to $g-1$.
All other curves in this symplectic basis are geodesics 
and hence have index equal to zero. 
Thus, it follows from formula (\ref{eqn:spin-parity}) 
that the spin parity equals $2g-1 \mod 2$. 

The map $p$ is primitive because, for example, 
the classes $p_*(a_1)$ and $p_*(b_g)$ generate the first homology 
of $\Cbb/ (\Zbb + i \Zbb)$.

\begin{figure} 

\includegraphics[scale=1.0]{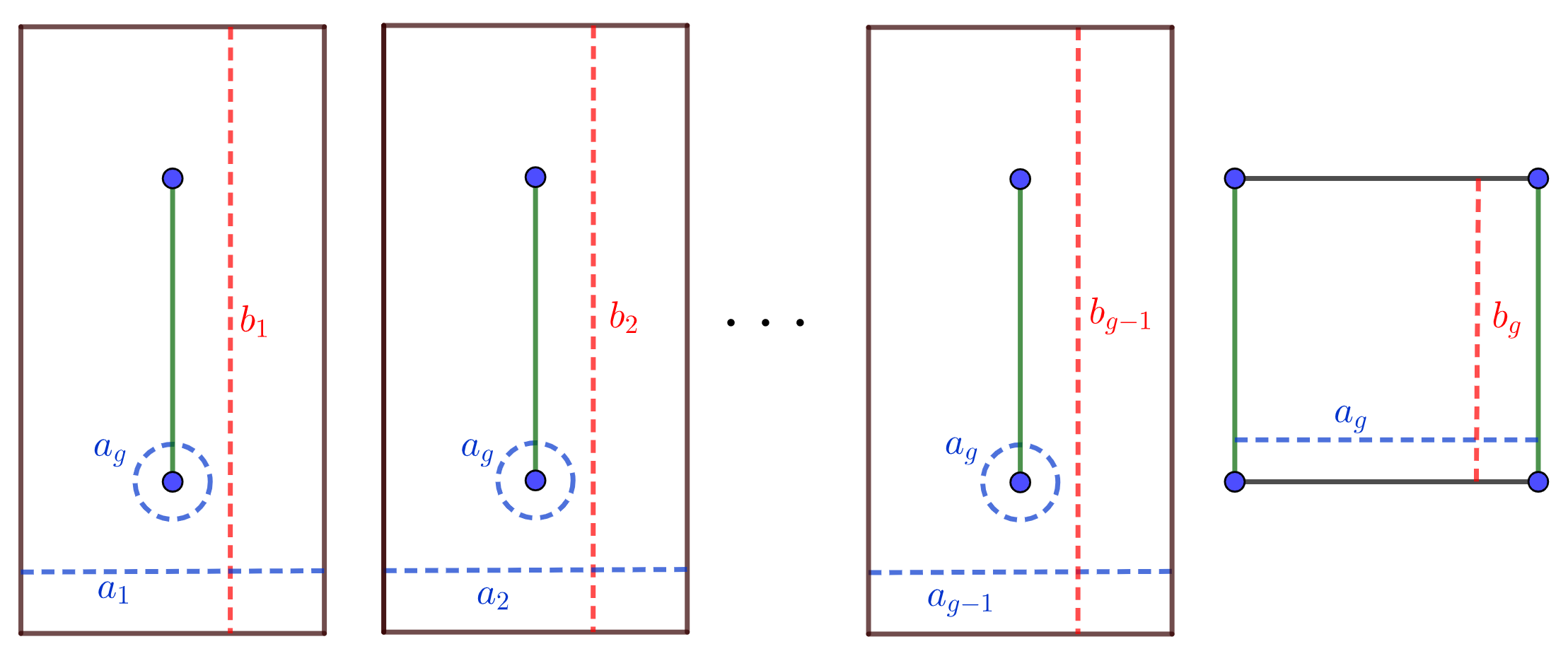}

\caption{
The simple closed curves $\{a_i,b_i\}$ form a symplectic basis 
for the first homology  of $O_g$. 
Note that the curve $a_g$ intersects each slit torus, and each
intersection contributes $1$ to the index of $a_g$.}

\label{fig:O_g-new-symplectic}

\end{figure}

To construct a primitive branched cover $p:O_g^d \to \Cbb/ (\Zbb + i \Zbb)$ 
of degree $2g-1+d$, replace the cycle $(2g-1)$ in the horizontal 
permutation $h$ with $(2g-1, 2g, \ldots, d-1,d)$. This is equivalent
to replacing the vertical cylinder that 
corresponds to the square labeled $2g-1$  with a vertical cylinder
of width $d-(2g-2)$.

\subsubsection{The even component}
\label{subsec:even-minimal}

For $g \geq 4$, let $p \colon E_g \to T$ be the degree $d=2g-1$ branched covering 
of $\Cbb/(\Zbb+ i \Zbb)$ defined by the permutations
\begin{eqnarray*}
h &=& (1,3,5,~\ldots~,2g-1,4) \\
v  &=& (1,2)(3,4)~ \cdots~ (2g-3,2g-2)(2g-1).
\end{eqnarray*}
See Figure \ref{fig:E_g-new}. The surface $E_g$ differs from $O_g$ in the way that 
the squares labeled 3 and 4 are attached. Arguments similar to the ones given 
in \S \ref{subsubsec:odd} show that $p$ is not hyperelliptic, is of even spin parity,
and is primitive. For example, the horizontal cylinder $C$ consisting of the square
labeled 4 and the odd numbered squares would be preserved by a hyperelliptic involution, 
and one can use this to argue that $E_g$ is not hyperelliptic. All of the 
elements in the symplectic basis in Figure \ref{fig:E_g-new} have index zero 
except for $a_2$ and $a_g$ which have indices $1$ and $g-1$ respectively.
In particular, the spin parity is $2g \mod 2$.

To obtain a degree $d$ cover $p : E_g^d \to \Cbb / (\Zbb + i \Zbb)$,
replace the vertical cylinder of width $1$ corresponding to the square 
labeled $2g - 1$ with a vertical cylinder of width $d - (2g - 2)$.
In other words, replace the cycle $(2g-1)$ that appears in $v$
with the cycle $(2g-1, 2g, \ldots, d-1,d)$.

\begin{figure} 

\includegraphics[scale=1]{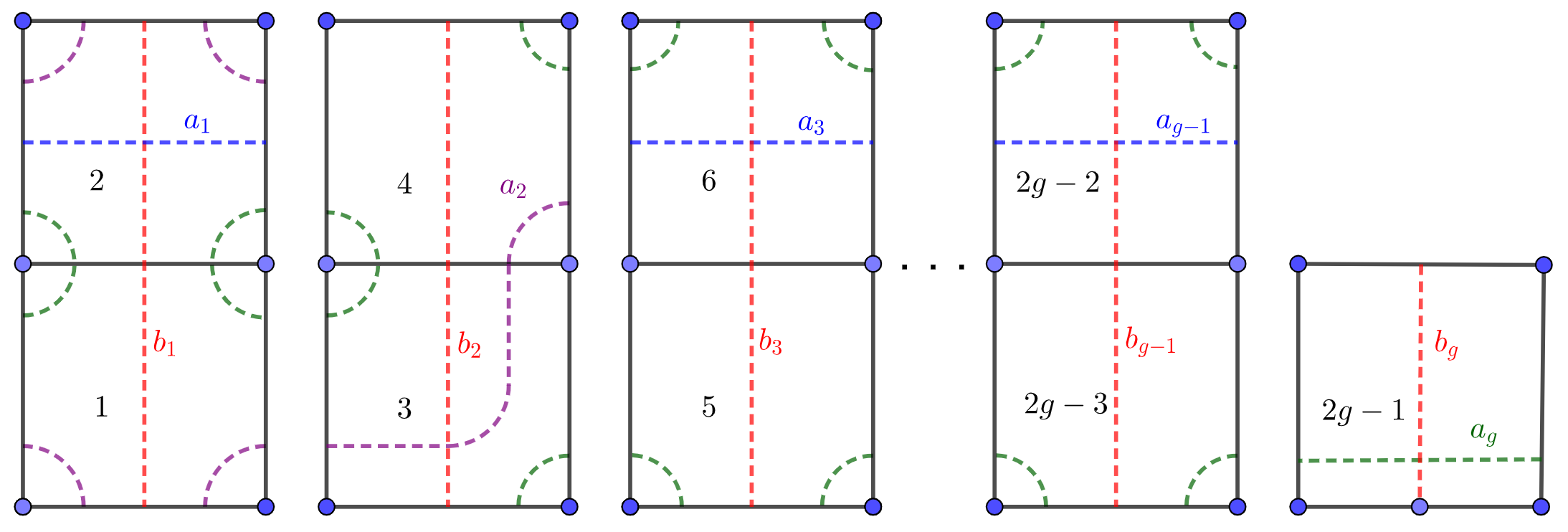}

\caption{
The even parity torus cover $E_g$ in the minimal stratum. 
The simple closed curves $\{a_i,b_i\}$ form a symplectic basis 
for the first homology  of $O_g$. 
The intersection of $a_g$ with each vertical cylinder of
height two contributes 1 to the index of $a_g$. All other basis
elements have index $0$ except for $a_2$ which has index $1$.}

\label{fig:E_g-new}

\end{figure}


\subsection{The strata with two zeros of equal order}  
\label{subsec:two-zeros}

According to \cite{KoZo}, if $g \geq 5$ is odd, then the stratum 
$\Omega \Mcal(g-1, g-1)$ has 
three connected components: hyperelliptic; even spin parity and non-hyperelliptic;
and odd parity and non-hyperelliptic. When $g=2,3$ or $g \geq 4$ and even,
the stratum has exactly two components: hyperelliptic and non-hyperelliptic.  
In \S \ref{subsec:g-1-hyp} we exhibit a surface in each hyperelliptic component, 
regardless of the parity of $g$, and then in \S \ref{subsec-g-1-non-hyp} 
we construct examples in the remaining non-hyperelliptic component(s).
Our constructions will be based on gluing together surfaces with slits.


\subsubsection{$\Omega \Mcal(g-1,g-1)^{\text{hyp}}$} \label{subsec:g-1-hyp}

If $g=2m$ is even, we construct a degree $g$ hyperelliptic torus cover as follows.  
First, create a genus two surface by gluing together two copies of $\Cbb/\Zbb^2$
that each have a horizontal slit. Take $m$ distinct copies, $S_1, \ldots, S_m$,
of this genus two surface.  From both $S_1$ and $S_m$ remove 
one of the two horizontal saddle connections that are distinct from slits
and from each of the remaining genus two surfaces, $S_2, \ldots, S_{m-1}$,
remove both of these horizontal saddle connections. Glue the top 
(resp. bottom) of the new slit on $S_1$ to the bottom (resp. top) of one
of the (new) slits on $S_2$. Then, inductively, glue the top (resp. bottom) of 
the remaining slit on $S_i$ to the bottom (resp. top) of one
of the slits on $S_{i+1}$. Let $X_g$ denote the resulting degree $g$ 
cover of $\Cbb/(\Zbb+ i \Zbb)$ when $g$ is even.

\begin{figure}

    \includegraphics[scale=.7]{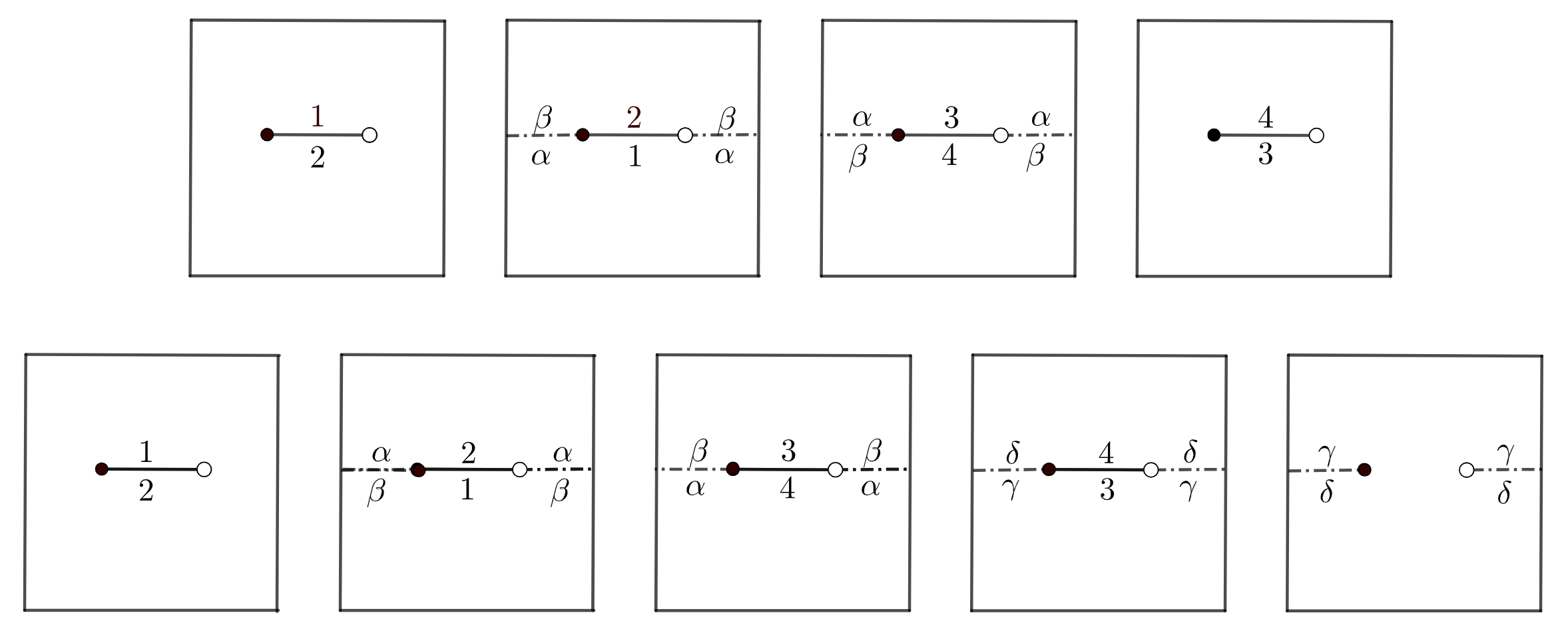}

    \caption{Primitive degree $g$ torus covers  in $\Omega\mathcal{M}_g(g-1, g-1)^{\text{hyp}}$ in the cases $g=4$ and $g=5$. Each square corresponds to a slit torus.}
    \label{fig:hyp(g-1,g-1)}
\end{figure}

If $g=2m +1$ is odd, then remove the horizontal saddle connection of $X_{2m}$
that lies in $S_m$ and then glue in an additional horizontally slit torus
to obtain the torus cover $X_{2m+1}$. The surfaces $X_4$ and $X_5$  
are described in Figure \ref{fig:hyp(g-1,g-1)}.

A torus cover $X_g^d$ of degree $d= k+ g-1$ can 
be constructed in the same way if one replaces a slit 
copy of $\Cbb/(\Zbb+ i\Zbb)$ in the construction of the genus two 
surface $S_1$ with a slit copy of $\Cbb/ (k \Zbb + i \Zbb)$.
The hyperelliptic involution on $X_g^d$ corresponds to the 
elliptic involution of each slit torus that fixes the center 
of each slit.  A vertical curve in $S_1$ 
(resp. horizontal curve in $S_2$) is mapped to the 
standard vertical (resp. horizontal) generator of $H_1(\Cbb/ (\Zbb+i \Zbb),\Zbb)$.
Hence the covering is primitive.

\begin{remk}
A degree $g$, primitive, hyperelliptic torus covering can also be 
defined in terms of the classical Chebyshev polynomial $P_g$, 
the unique polynomial satisfying
\[
    P_g(\cos \theta)~ =~ \cos \left(g \cdot \theta \right)
\]
for each $\theta \in \Rbb$.
Given $a \in (0,1)$ such that $P_g(a)\neq \pm 1$,
let $q$ be the unique quadratic differential on the Riemann sphere 
$\widehat{\Cbb}$ with simple poles at $\{\pm 1,\pm a\}$. 
The set $P_g^{-1}\{+1, -1\}$ consists of all $g-1$ critical points of degree
two together with the two additional points at which $P_g$ is not branched. 
The map $P_g$ is not branched at any of the $2g$ points in $P_g^{-1}\{+a, -a\}$. 
It follows that $P_g^*(q)$ has $2+2g$ simple poles and 
one zero of degree $2g-2$ at $\infty \in \widehat{\Cbb}$. 
Let $(X,\omega)$ and $(\Cbb/\Lambda,dz)$ be the respective 
canonical double covers of $(\widehat{\Cbb},P_g^*(q))$ and $(\widehat{\Cbb}, q)$. 
The map $P_g\colon \widehat{\Cbb} \to \widehat{\Cbb}$ lifts 
to a primitive degree $g$ branched cover 
$\tP_g \colon X \to \Cbb/\Lambda$ so that $\tP_g^*(dz)=\omega$. 
It follows that $(X, \omega)$ lies in the 
hyperelliptic component of $\Omega \Mcal(g-1,g-1)$. 
\end{remk}


\subsubsection{Non-hyperelliptic components of $\Omega \mathcal{M}_g(g-1,g-1)$}
\label{subsec-g-1-non-hyp}

Recall that if $g=3$ or $g \geq 4$ and $g$ is even, then there 
is exactly one non-hyperelliptic component. 
If $g \geq 5$ and $g$ is odd, then there are exactly 
two non-hyperelliptic components, 
one consisting of odd spin parity 1-forms and 
one consisting of even spin parity 1-forms.
We first construct a torus covering that is non-hyperelliptic in each genus
and then observe that if $g$ is odd, then its spin parity is odd.
Then we separately construct an even spin torus covering for $g$ odd. 

For each $g \geq 3$, define a degree $g$ torus cover 
$X_g$ by cyclically 
gluing together distinct horizontally slit tori $S_1, \ldots, S_g$. 
To be more precise, glue the top of the slit on $S_i$ to the bottom
of the slit on $S_{i+1}$. The case of $g=5$ is illustrated in Figure 
\ref{fig:non-hyp odd}.

\begin{figure} 
    \includegraphics[scale=.7]{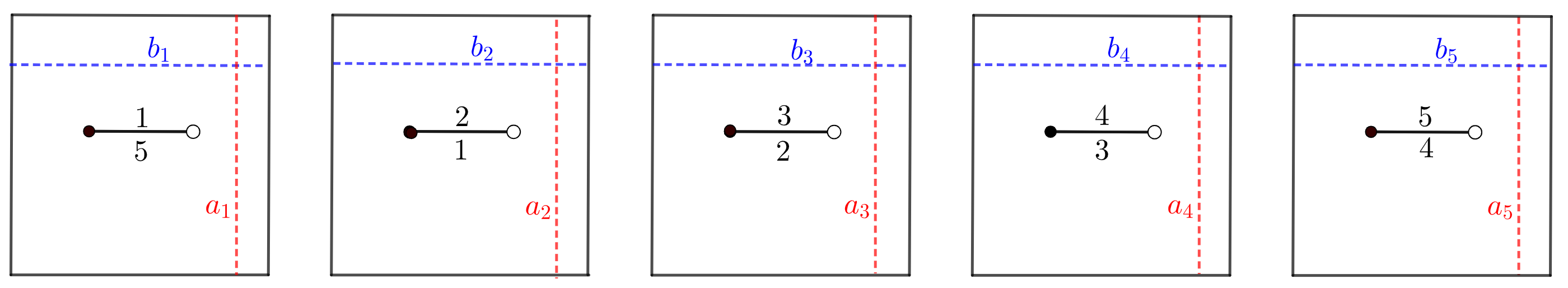}
    \caption{A cyclically glued $g$-slit torus cover $X_g$ when $g=5$}
    \label{fig:non-hyp odd}
\end{figure}

To prove that the surface $X_g$ is not hyperelliptic, let us assume
to the contrary that a hyperelliptic involution $\tau$ exists and derive
a contradiction.
Let $C$ be the vertical cylinder that contains each of the slits $s_i \subset S_i$. 
The cylinder $C$ is the only vertical cylinder that has length greater than one, 
and hence it would be preserved by a hyperelliptic involution $\tau$.
Thus, $\tau$ would preserve the union of horizontal saddle connections
that belong to $C$, and hence would preserve the complement $A$, that is the 
disjoint union of the slit tori $S_i$. If $\tau$ were to map one slit torus $S_i$ onto
a distinct slit torus $S_j$, then the quotient $X_g/\langle \tau\rangle$ 
would contain the embedded one-holed torus $S_i \cup S_j/\langle \tau\rangle$, 
and hence the quotient would not be a sphere. 
Thus the hyperelliptic involution $\tau$ would have to preserve each $S_i$, and hence
would act as an elliptic involution on each $S_i$. It follows that 
the involution $\tau|_{S_{i}}$ has a fixed point $x_i \in C$. Hence $C$ 
contains $g$ fixed 
points, and since $g \geq 3$, this is the desired contradiction.

When $g$ is odd, then the spin parity of $X_g$ is well-defined, and  
a straightforward argument shows that the spin parity of $X_g$ is odd. 
Indeed, choose a homology basis for each slit torus $S_i$ consisting
of a vertical and a horizontal curve. The index of each of these curves is zero. 
Thus, the spin parity of $X_g$ is $\sum_{i=1}^g 1 \equiv  g \mod 2$.

To obtain non-hyperelliptic covers $p \colon X_g^d \to T$ of degree $d=g-1+k$, one may modify 
the construction by replacing, for example, $S_1$ with the slit torus obtained by
removing a horizontal slit $s$ from the torus $\Cbb/(k \Zbb + i\Zbb)$. 
Similar arguments show that $X_g^d$ is not hyperelliptic and has spin parity equal
to $g \mod 2$.

It remains to construct, for each odd $g\geq 5$ and each $d \geq g$,
a non-hyperelliptic, even spin parity, torus cover in $\Omega \Mcal(g-1,g-1)$
of degree $d$. To construct it for degree $d=g$, we start from the surface $X_g$ 
with odd parity, cut along two slits, and glue differently. To be precise, 
recall that $X_g$ is the union of slit tori $S_1, \ldots S_g$.
For each $i$, the slit of $S_i$ is a segment in a horizontal closed geodesic
in the torus. Let $\delta_i$ be the complementary segment in this
horizontal geodesic. In particular, the corresponding segment on the 
surface $X_g$ is a horizontal saddle connection that we still denote by $\delta_i$. 
Remove $\delta_2$ and $\delta_{g-2}$ from $X_g$. 
Glue the top (resp. bottom) of $\delta_2$ to the bottom (resp. top)
of $\delta_{g-2}$. See Figure  \ref{fig:non-hyp-even}
for the case of $g=5$. The top of the slit $\delta_2$ and the bottom of the 
slit $\delta_{g-2}$ are labeled with $\alpha$, and the bottom of $\delta_2$ 
and the top of $\delta_{g-2}$ 
are labeled by $\beta$.  The resulting surface 
$Y_g$ covers $T$, and using the homology basis illustrated in Figure
\ref{fig:non-hyp-even},
one finds that the spin parity is $g-2 +2 +3 \equiv g+3 \mod 2$.

To obtain torus covers of higher degree one need only, as above, replace one of 
the slit tori with the slit torus coming from $\Cbb/(k\Zbb + i\Zbb)$.  
To see that the surface $Y_g$ is not hyperelliptic, apply the argument 
used for $X_g$ to the unique vertical
cylinder $C$ in $X_{g-2}$ that has circumference greater than $2$.

\begin{figure}
    \includegraphics[scale=.7]{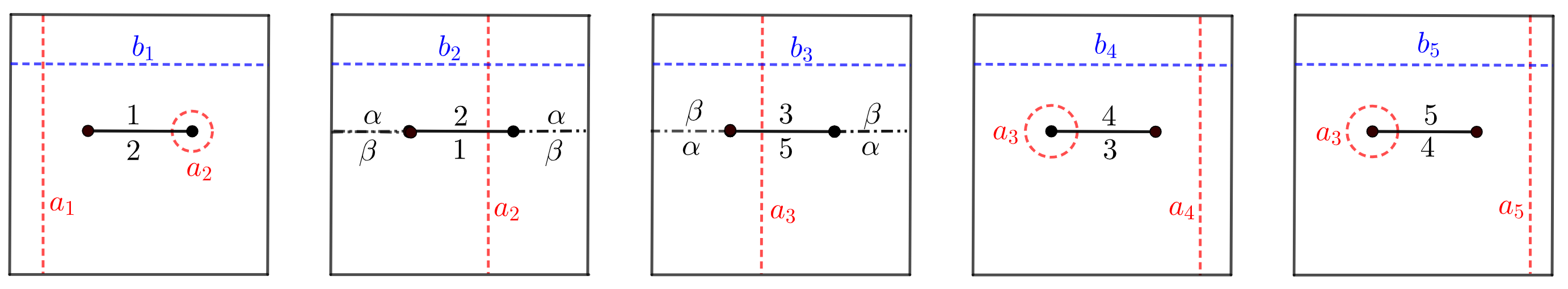}
    \caption{A genus 5 torus covering $Y_g$ in the 
     even spin parity component of $\Omega \mathcal{M}(4,4)$.
    } 
    \label{fig:non-hyp-even}
\end{figure}


\subsection{Surgeries that add zeros and preserve degree}   
\label{subsec:other}

Thus far, we have produced torus coverings in each connected component of both
the minimal stratum $\Omega \Mcal_g(2g-2)$ and the stratum
$\Omega \Mcal_g(g-1,g-1)$. To obtain torus covers in all connected 
components of all other strata, we will perform certain `surgeries' 
on the torus covers $E_g^d$ and $O_g^d$ in the minimal strata 
as well as a variant, $Z_g^d$, of these that will be described 
in \S \ref{subsec:highest-order-odd}.
Each surgery described here modifies the torus covering by adding zeros, 
increasing genus, and preserving degree. 
Each surgery can be performed on a torus cover branched over one point 
that has at least one vertical cylinder of circumference one and that has
sufficiently many vertical cylinders of circumference at least two.

To be precise, let $p: X \to T$ be a torus covering
of degree $d$ such that there exists a  
vertical cylinder $C \subset T$ that
does not contain a branch point of $p$.\footnote{The boundary of the cylinder $C$ need not contain any branch points.}
We will say that the torus covering $p$ is {\it surgery admissible 
with respect to $C$ and $k$} if the components of $p^{-1}(C)$ consist of
\begin{itemize}
\item at least $k$ cylinders each having circumferences at least two, and

\item  at least one nonseparating cylinder whose circumference equals one.
\end{itemize}
In particular, to be surgery admissible $p$ must have degree $d \geq 2k+1$.

In \S \ref{subsec:single-even-order}, we show how to add a zero 
of order $2k$
to a surgery admissible covering, and in \S \ref{subsubsec:pair_same_odd}
we show how to add a pair of odd order zeros. Each of these surgeries produces 
a surgery admissible torus covering. Therefore, we may apply any finite sequence 
of these surgeries.
In \S \ref{subsec:parity-comp}, we show how to calculate the  
change in the spin parity so as to be sure that we can obtain a torus
covering in each connected component of a stratum.

According to Theorem 1 in \cite{KoZo}, components consisting of
hyperelliptic surfaces only occur in the strata $\Omega \Mcal(2g-2)$ and 
$\Omega \Mcal(g-1,g-1)$. Thus, we will 
not need to consider the effect 
of surgeries on hyperellipticity.


\subsubsection{Adding a zero of order $2k$} 
\label{subsec:single-even-order}
Let $p: X \to T$ be a surgery admissible torus cover.
In this subsection, we describe a `surgery' on this torus covering that 
yields a surgery admissible torus 
covering $\op: \oX \to T$ with the same degree 
$d$ and an additional zero of order $2k$. 

Let $C \subset T$ be the vertical cylinder
that does not contain a branch point of $p$. 
Let $C_0$ be a component of $p^{-1}(C)$ 
that has circumference 1 and let  $C_1, \ldots, C_k$ 
be components of circumference at least two. 
Choose a vertical closed geodesic $\sigma \subset C$ and 
choose $P \in \sigma$.
The inverse image $p^{-1}(\sigma)$ consists of disjoint closed geodesics. 
The set $p^{-1}(\sigma \setminus \{P\})$ consists of $d$ disjoint vertical segments. 
If the vertical cylinder $C_i$ has circumference $m$, then 
at least $m$ of these segments lies in $C_i$. 
Choose exactly one segment $\sigma_i$ from each of the cylinder $C_i$. 
See Figure \ref{fig:even_order}.
Cut along each $\sigma_i$ and glue the left side of $\sigma_i$ to the right side
of $\sigma_{i+1}$. Let $\oX$ be the resulting surface.

The covering $p$ determines a surgery admissible 
torus covering $\op: \oX \to T$
of degree $d$ that is branched over $0$ and $P$.
Moreover, the 1-form $\op^*(dz)$ has an additional zero of order $2k$, 
and the genus of $\oX$ is $k$ greater than the genus of $X$.

\begin{figure}
    \centering
    \includegraphics[scale=1]{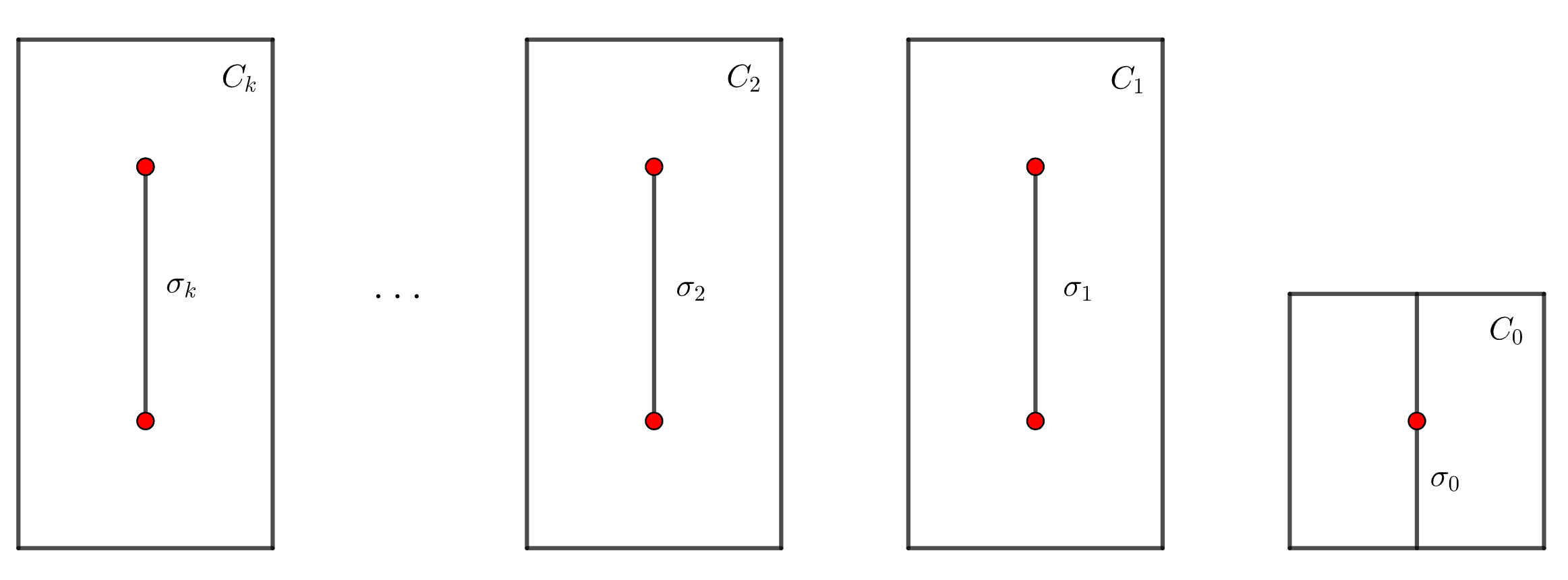}    
    \caption{Adding a zero of order $2k$. Cut along 
    each $\sigma_i$ and identify the left side of $\sigma_i$
    to the right side of $\sigma_j$. The red endpoints 
    are thus all identified with one another and 
    they represent a ramification point of local 
    index $2k+1$ over $P$. }
    \label{fig:even_order}
\end{figure}


\subsubsection{Adding a pair of zeros of odd order}
\label{subsubsec:pair_same_odd}

In this subsection we describe a surgery on $p: X \to T$
that adds a zero of order $2k-1$ and a zero of order $2k'-1$ where 
$k' \leq k$.\footnote{The orders of zeros of a holomorphic 1-form $\omega$ on a genus 
$g$ Riemann surface must sum to $2g - 2$, and so $\omega$ 
has even number of zeros of odd order. 
Thus, any surgery that increases the number of odd order
zeros will necessarily increase the number of odd order zeros by an even integer.} 
We will first assume that $k'=k$ and  
then show how to modify this surgery when $k' < k$.

To add a pair of zeros that have the same order $2k-1$,
choose a horizontal segment $\tau$ that lies in $C$
and has length strictly less than the width of $C$. Let $\tau_0$ 
be the unique component of $p^{-1}(\tau)$ that lies in $C_0$.
For each $i \in \{1,\ldots, k-1\}$, choose two connected components, 
$\tau_{2i-1}$ and $\tau_{2i}$, of $p^{-1}(\tau)$ that lie in $C_i$, 
and choose one component, $\tau_{2k-1}$,
of $p^{-1}(\tau)$ that lies in $C_k$. See Figure \ref{fig:odd-same}.
Cut the surface $X$ along each $\tau_i$. Then 
identify the top of $\tau_i$ with the bottom of $\tau_{i+1}$. The resulting 
surface is a degree $d$ torus cover with two new zeros of order $2k-1$.

\begin{figure}
    \centering
    \includegraphics[scale=1]{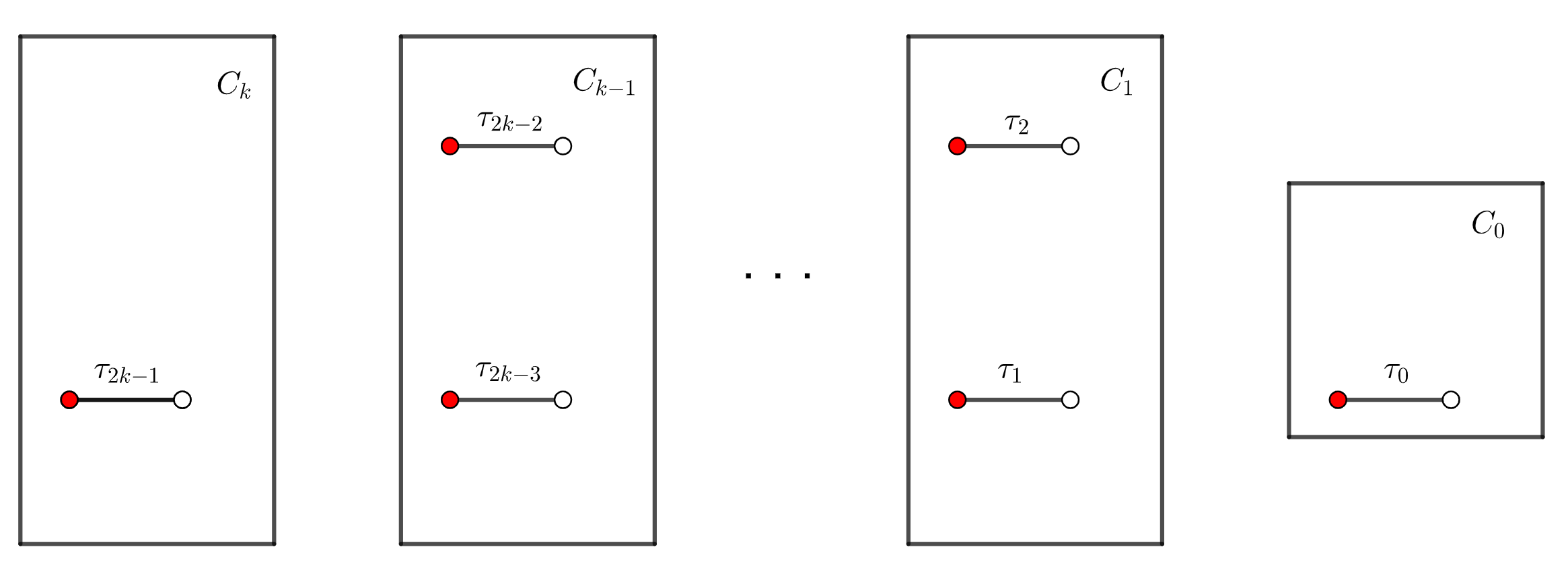}    
    \caption{The 
    white points are ramification points
    over one endpoint of $\tau$,
    and the red points are ramifications over the other endpoint of $\tau$.}
    \label{fig:odd-same}
\end{figure}

We next modify the construction to show how to add one zero of order 
$2k-1$  and one zero of order $2k'-1$ where $k'<k$. Roughly speaking, 
the surgery is a combination of the surgery that adds two zeros of order $2k'-1$ 
and the surgery that adds a zero of order $2(k-k')$. To be precise, let $\sigma$ be 
a vertical closed geodesic that lies in $C$
and let $\tau$ be a horizontal segment in $C$ 
that has one endpoint $P$ on $\sigma$. Let $\tau_0$ be the component of 
$p^{-1}(\tau)$ that lies in $C_0$ and let $\sigma_0$ 
be the lift of $\sigma$ to $C_0$.
For $i \in \{1,\ldots,k'-1\}$ choose two components, 
$\tau_{2i-1}$ and $\tau_{2i}$,
of $p^{-1}(\tau)$ that lie in $C_i$, choose one component, $\tau_{2k'-1}$,
of $p^{-1}(\tau)$ 
that lies in $C_{k'}$. For $i \in \{k'+1, \ldots, k \}$ 
choose one component, $\sigma_{i}$, of $p^{-1}\left(\sigma \setminus \{P\} \right)$
that lies in $C_i$. Cut along each  $\sigma_i$ and each $\tau_i$, 
cyclically reglue the $\sigma_i$, and cyclically reglue the 
$\tau_i$.  The new zero that corresponds to the point $P$ has order $2k-1$ 
and the new zero that corresponds to the other endpoint, $Q$, of $\tau$ has order $2k'-1$. 
See Figure \ref{fig:odd-different}  for an example of this construction.

\begin{figure}
    \centering
    \includegraphics[scale=1]{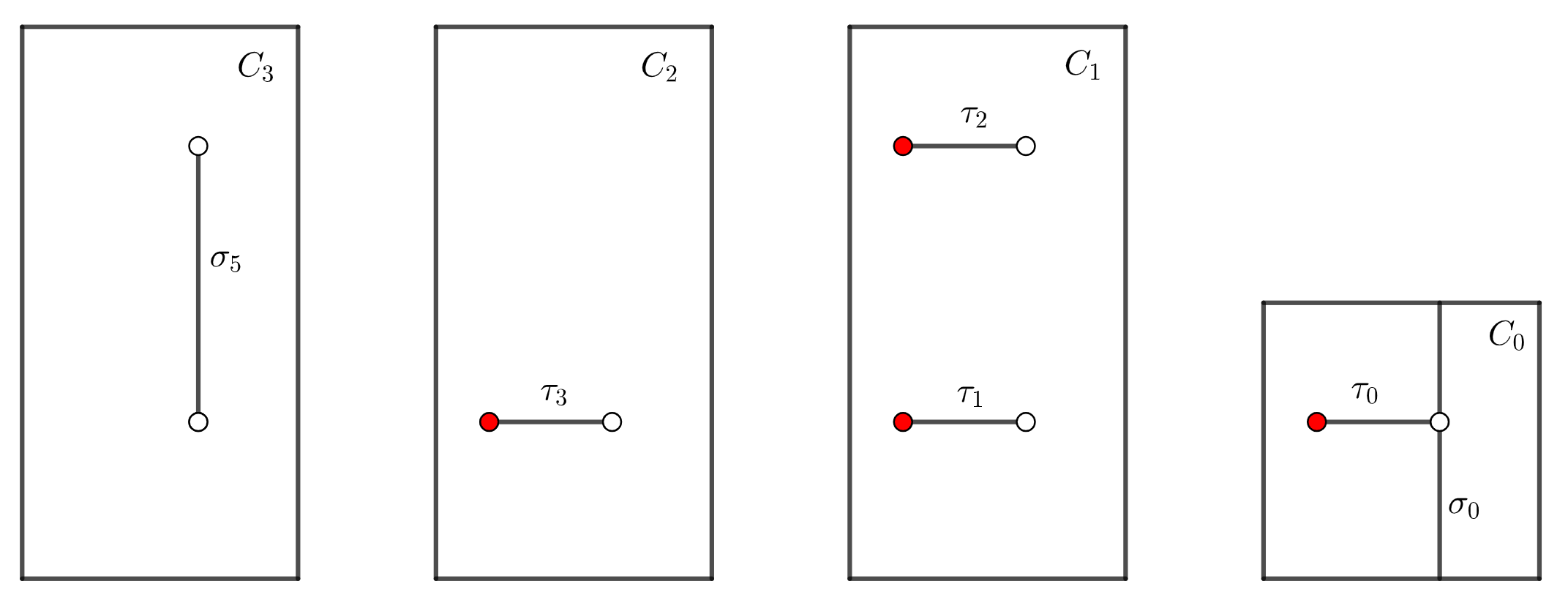}    
    \caption{Adding zeros of different odd orders to $X$. 
     The new zero that corresponds to the white points has order 5, and the 
     zero corresponds to the red points has order 3.}
    \label{fig:odd-different}
\end{figure}


\subsubsection{Change of parity computations} 
\label{subsec:parity-comp}

In this subsection we consider how the spin parity changes 
when the surgery described in \S \ref{subsec:single-even-order}
is applied. In particular, we find that 
adding a zero of order $2k$ 
preserves the spin parity if $k$ is even and it changes the spin parity
if $k$ is odd. Recall that the spin parity is not defined for 1-forms with 
zeros of odd order, and hence we will not consider the surgery
of \S \ref{subsubsec:pair_same_odd}.

Let $p: X \to T$ be a surgery admissible torus covering,
and let $\op: \oX \to T$ be the result of applying the surgery of
\S \ref{subsec:single-even-order}.

\begin{lem}
\label{lemma:add_order}
If $k$ is even, then the spin parity of $p^*(dz)$ equals the 
spin parity of $\op^*(dz)$. If $k$ is odd, then the spin parity 
of $p^*(dz)$ does not equal the spin parity of $\op^*(dz)$. 
\end{lem}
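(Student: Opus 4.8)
The plan is to compare the spin parity formula~\eqref{eqn:spin-parity} evaluated on a carefully chosen symplectic basis for $H_1(X;\Zbb)$ with the corresponding formula on $H_1(\oX;\Zbb)$. The surgery of \S\ref{subsec:single-even-order} removes $k+1$ disjoint vertical segments $\sigma_0,\sigma_1,\dots,\sigma_k$ (with $\sigma_0\subset C_0$ nonseparating of circumference one) and reglues them cyclically, raising the genus by exactly $k$. So I would first fix a symplectic basis $\{a_i,b_i\}_{i=1}^{g}$ for $H_1(X;\Zbb)$ represented by geodesics avoiding the zeros and avoiding the $\sigma_i$, chosen so that $b_0$ is the core curve of the circumference-one cylinder $C_0$ (a nonseparating vertical curve with $\ind(b_0)=0$) and $a_0$ is a curve crossing $\sigma_0$ exactly once, also of index zero since it can be taken piecewise-geodesic with the relevant contribution cancelling. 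The point is that on $X$ every basis element is a geodesic or a curve of controllable index, so the parity of $p^*(dz)$ is some explicit value $\varepsilon$.

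Next I would exhibit the extra $k$ handles created by the surgery. Cutting along $\sigma_1,\dots,\sigma_k$ and regluing cyclically can be analyzed locally: it is the standard operation of connect-summing $k+1$ annular pieces into one, which adds $k$ new symplectic pairs $\{a_i',b_i'\}_{i=1}^{k}$. Each $b_i'$ can be taken to be a short vertical geodesic arc closed up through the new identifications — a closed vertical geodesic of index $0$. Each $a_i'$ is a curve that runs horizontally from the copy of $\sigma_i$ in $C_i$ across to $\sigma_{i+1}$ in $C_{i+1}$ and back; the key computation is that $\ind(a_i')=1$ for each such curve, because $a_i'$ must turn once around the new ramification point of local index $2k+1$ (exactly as the curve $a_g$ ``turns'' once per slit torus in the $O_g$ computation of \S\ref{subsubsec:odd}). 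Crucially I must also check that the old basis curves $\{a_i,b_i\}$ can be pushed off the surgery region so that their indices are unchanged, and that together with $\{a_i',b_i'\}$ they form a symplectic basis for $H_1(\oX;\Zbb)$; this follows from a Mayer–Vietoris / intersection-form bookkeeping argument.

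Then the parity of $\op^*(dz)$ equals
\begin{equation*}
\varepsilon + \sum_{i=1}^{k}(\ind(a_i')+1)(\ind(b_i')+1) \equiv \varepsilon + \sum_{i=1}^k (1+1)(0+1) \equiv \varepsilon + 2k \equiv \varepsilon \pmod 2?
\end{equation*}
Wait — that would give no change ever, so the actual index bookkeeping must be subtler: the correct count is that the surgery also alters the index of one previously-chosen basis curve (the curve $a_0$ through $C_0$, which after surgery threads all $k$ inserted segments and thus picks up index $k$), contributing a change $(\,(k+1)-1\,)(\ind(b_0)+1)=k\cdot 1=k\pmod 2$. This is the heart of the matter, and it is the step I expect to be the main obstacle: determining precisely which curves change index and by how much, and confirming that the net change in~\eqref{eqn:spin-parity} is $\equiv k \pmod 2$ — i.e.\ that the $2k$ worth of contributions from the new pairs vanishes mod~$2$ while exactly one odd-or-even discrepancy of size $k$ survives through the altered curve $a_0$. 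Once this index computation is nailed down, the lemma follows immediately: the parity changes precisely when $k$ is odd.
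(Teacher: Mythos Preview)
Your sketch has a genuine gap, and you identify it yourself: the index bookkeeping for general $k$ is never actually carried out. You first compute $\ind(a_i')=1$ for each new pair and get a net change of $2k\equiv 0$, then abandon that and conjecture instead that the old curve $a_0$ ``threads all $k$ inserted segments'' and picks up index $k$. Neither mechanism is correct as stated. In the surgery, the curve $a_0$ crosses $\sigma_0$ once; after the cyclic regluing (left of $\sigma_i$ to right of $\sigma_{i+1}$) the two ends of $a_0$ now exit into $C_1$ and $C_k$ respectively, so $a_0$ is not a closed curve at all on $\oX$---it cannot simply ``pick up index $k$''. Nor can the old basis be ``pushed off the surgery region'': the whole point is that $C_0$ carries a nonseparating cycle that is inevitably disrupted. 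So the direct general-$k$ computation you outline would require first re-closing $a_0$ through the new cylinders, and the resulting index contributions are more intricate than either of your two guesses.

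The paper sidesteps this entirely. It first treats $k=1$ by hand: one arranges $a_0$ to cross both $\sigma_0$ and $\sigma_1$, so that after surgery $a_0$ splits into two closed curves $a_0^{\pm}$; adding a new vertical curve $b_0^-$ completes a symplectic basis for $\oX$, and since $\ind(a_0)=\ind(a_0^+)+\ind(a_0^-)$ while all the $b$-curves have index $0$, the parity changes by exactly $1$. For $k>1$ the paper does \emph{not} attempt a direct index computation. Instead it observes that the order-$2k$ surgery on $X$ produces a surface $\oX_k$ that can be realized as an arbitrarily small perturbation of the surface $\oY_k$ obtained by adding an order-$2$ zero to $\oX_{k-1}$; an explicit piecewise-smooth homeomorphism depending continuously on the perturbation parameter shows their spin parities agree. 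Induction on $k$ then reduces everything to the $k=1$ case, giving a net change of $k\bmod 2$. This perturbation-and-induction argument is the key idea you are missing.
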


\begin{proof}
Let $C, C_0, C_1, \ldots, C_k$,  
$\sigma, \sigma_0, \sigma_1, \ldots, \sigma_{k}$, and $P$ be as in 
\S \ref{subsec:single-even-order}.

We first prove the statement in the special case of $k=1$. 
Let $b$ be a vertical closed geodesic in $C$ that is disjoint from 
$\sigma$ and let $b_0$ be the component of $p^{-1}(b)$ that lies in $C_0$. 
Since $X$ is surgery admissible, the simple closed curve $b_0$
is not null-homologous. Let $a_0$ be a simple closed curve 
on $X$ so that the geometric intersection number $i(a_0, b_0) = 1$,
so that $a_0$ does not intersect a ramification point of $p$,
and so that $a_0 \cap C_0$ is a horizontal segment.
We further suppose that $a_1$ intersects $\sigma_1$ orthogonally
at a point in $p^{-1}(a \cap \sigma)$. Thus, after cutting along 
$\sigma_0$ and $\sigma_1$ and regluing as  
described in \S \ref{subsec:single-even-order}, 
the closed curve $a_0$ becomes two simple closed curves, $a_0^+$ and $a_0^-$.
Let $a_0^+$ be the resulting simple closed curve that intersects 
$b_0^+ := b_0$ and let $a_0^-$ be the other curve.  
Let $b_0^-$ be a vertical geodesic in $C_0$ that intersects $a_0^-$.
See Figure \ref{fig:even-surgery-symplectic}.

\begin{figure}
    \centering
    \includegraphics[scale=2]{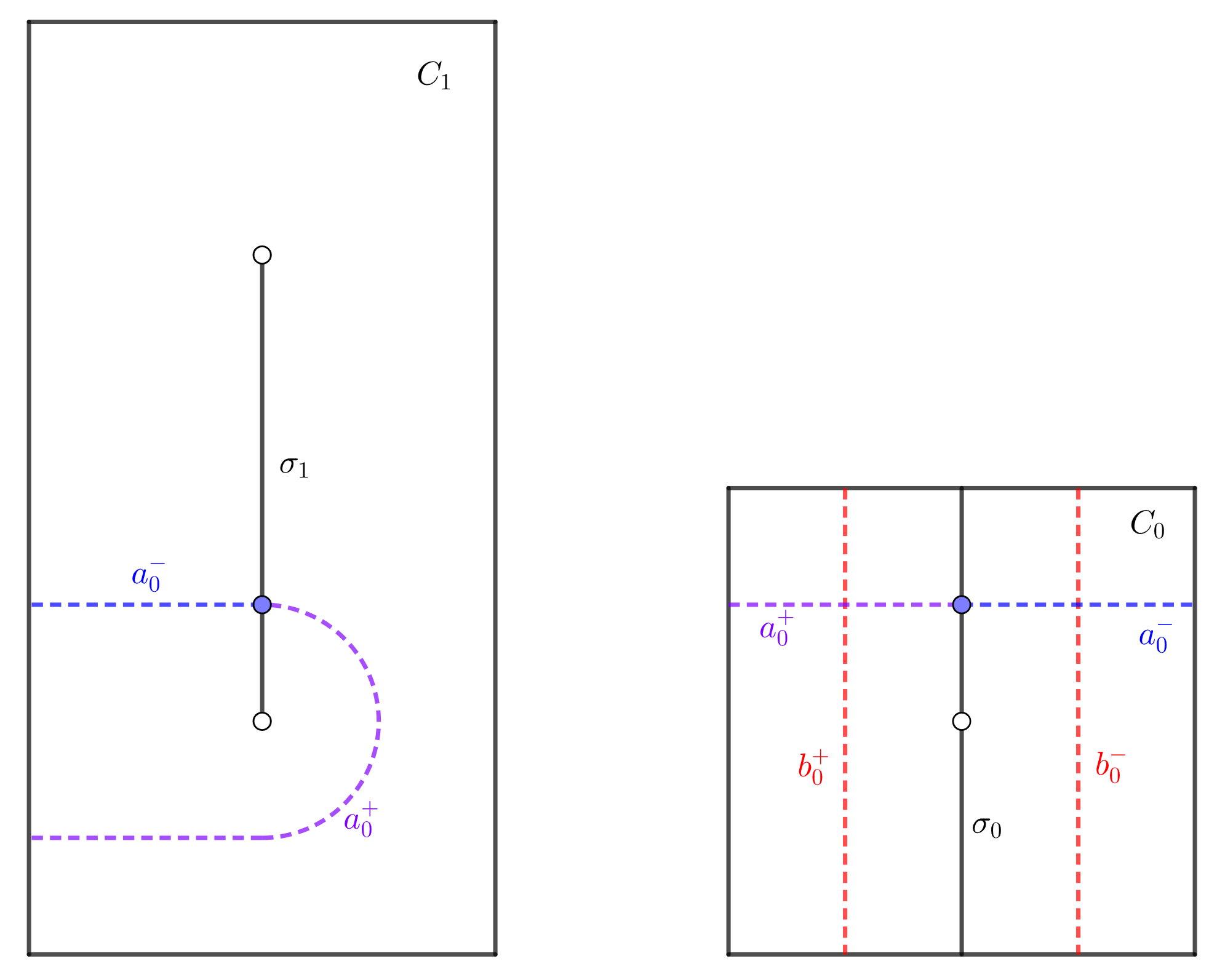}    
    \caption{The first four elements of symplectic basis for the
    surface that results from adding one zero of order two.}
    \label{fig:even-surgery-symplectic}
\end{figure}

Complete $\{a_0,b_0\}$ to a symplectic basis 
$\{a_0,b_0, \ldots, a_{g-1},b_{g-1} \}$ 
for $H_1(X; \Zbb)$ so that no $a_i$ nor $b_i$ intersects
a ramification point or $\sigma_1$ if $i >0$.
Then the collection 
$\{ a_0^+, b_0^+, a_0^-, b_0^-, a_1,b_1, \ldots, a_{g-1},b_{g-1} \}$
is a symplectic basis for the surface that results from the surgery. \
The curves $a_i$ and $b_i$ do not change if $i>0$ and hence their
indices do not change. The curves $b_0=b_0^+$ and $b_0^-$ are geodesics
and hence their indices equal zero. The index of $a_0$ equals the 
sum of the indices of $a_0^+$ and $a_0^-$. It follows that 
the spin parity `increases' by $1$. Hence the claim is proven in the 
case $k=1$.

To prove the claim for $k>1$, we will consider, for $i \leq k$, 
the result  $\op_i: \oX_{i} \to T$ of adding a zero of order $2i$ using
$C_0, C_1, \ldots, C_{i}$ and curves $\sigma_{0}, \ldots, \sigma_{i}$,
and we will consider the result  $\oq_i: \oY_i\to T$ 
of adding a zero of order $2$ to $\oX_{i-1}$. 
It suffices to show that for each $i \leq k$, 
the spin parity of $\op_i^*(dz)$ equals 
the spin parity of $\oq_i^*(dz)$. Indeed, an inductive
argument using the case $k=1$ would then imply the claim.

To prove that the spin parity $\op_i^*(dz)$ equals the spin parity of $\oq_i^*(dz)$,
we realize $\oY_i$ as an arbitrarily small perturbation of $\oX_i$. 
In particular, we choose $\delta>0$ and add a zero of order two to 
$\oX_{i-1}$ as follows: Let  $\sigma' \subset C$ be the vertical geodesic
to the `right' of  $\sigma$ such that the distance between 
$\sigma$ and $\sigma'$  equals $\delta$. Let $\alpha \subset T$ 
be the horizontal geodesic that intersects $\sigma$ at $P$.
Let $P'$ denote the intersection point of $\alpha$ and $\sigma'$.
Let $\sigma_{0}'$ be the component of $p^{-1}(\sigma') \cap C_0$ that
has distance $\delta$ from $\sigma_{0}$ and that is to the `right' of $\sigma_0$. 
Let $R_i$ be the connected component of $p^{-1}(C \setminus \alpha) \cap C_i$
that contains $\sigma_i$, and let $\sigma_{0}'$ 
be the component of $p^{-1}(\sigma' \setminus P)$ contained in $R_i$ 
that lies to the right of $\sigma_i$ and has distance $\delta$ from $\sigma_i$.
Cut along $\sigma_0'$ and $\sigma_1'$ and glue the left (resp. right) side of 
$\sigma_0'$ to the right (resp. left) side of $\sigma_1'$.
The resulting torus covering is $\oq_i : \oY_i \to T$. 

We next construct a piecewise differentiable homeomorphism 
$f: \oY_i \to \oX_{i}$ as follows. 
Define $f$ to be the identity on the complement of $C_0 \cup C_{i}$.
The right side of the segment $\sigma_{i}$ corresponds to a simple closed 
curve $\gamma \subset \oX_{i}$. Let $A$ be the annular neighborhood 
of $\gamma$ consisting of points of distance at most $\delta/2$ from $\gamma$.
Let $A^+$ be the connected component of $A \setminus \gamma$ that 
lies in $C_0 \cup C_{i}$. Define $f$ so that it maps the annulus 
$A' \subset C_0$ bounded by $\sigma_{0}$ and the right by 
$\sigma_{0}'$ onto the annulus $A_+$. Define $f$ so that it maps
the cylinder in $C_0$ that lies to the right of $\sigma'_0$ to
the part of the cylinder in $C_0$ that lies to the right of $\sigma_0$
that is exterior to $A^+$. Define $f$ to map the thrice holed
sphere $C_i \setminus \sigma_i'$ to the thrice holed sphere 
$C_i \setminus A^+$.  See Figure \ref{fig:even-surgery-homeo}.

\begin{figure}
    \includegraphics[scale=.32]{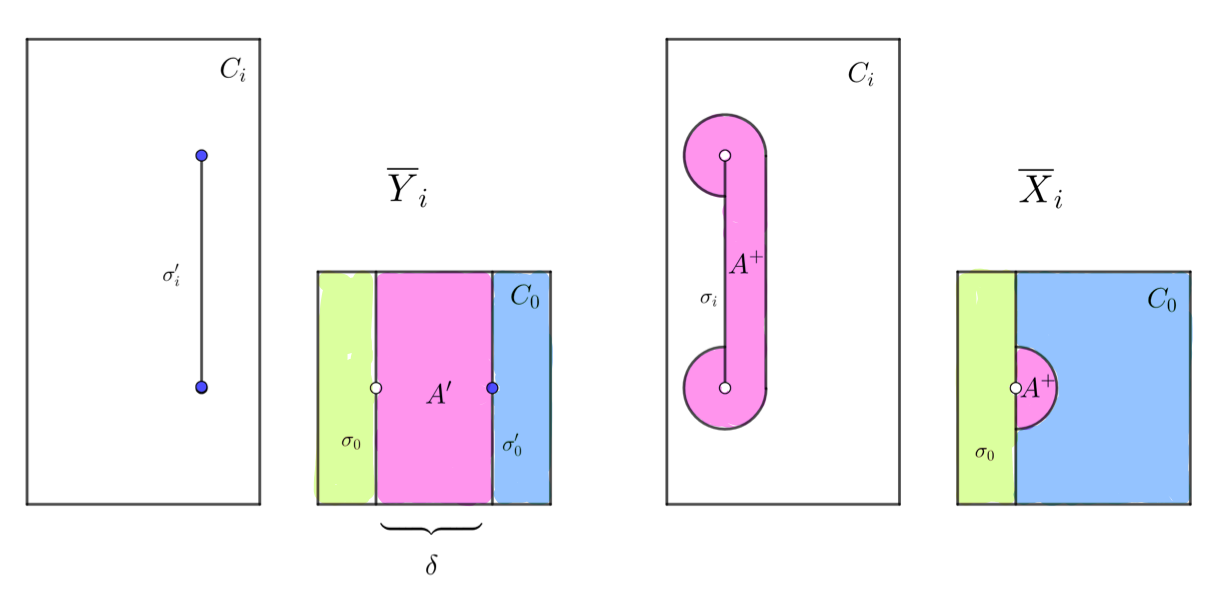}    
    \caption{The homeomorphism $f$ that maps $\oY_i$ to $\oX_i$.
    The map $f$ is the identity on the complement of 
    the cylinders $C_0$ and $C_i$. Each colored region
    in $C_0 \cup C_i$ is mapped to the region of the 
    same color in $C_0 \cup C_i$.}
    \label{fig:even-surgery-homeo}
\end{figure}

The construction of $f$ can be made to depend 
continuously on the parameter $\delta$. 
By pulling back the $1$-form $\oq^*(dz)$ using the inverse $f^{-1}$
we obtain a continuous family of $1$-forms $\omega_{\delta}$ on $\oX_i$.
Each zero of $\omega_{\delta}$ defines a  
simple arc on $\oX_i$ that is parametrized by $\delta$.
These arcs are disjoint, and hence we may choose a symplectic basis 
for $H_1(\oX_i; \Zbb)$ that avoids these arcs.  
It follows that the spin parity of $\omega_{\delta}$ 
is constant in $\delta$. Thus, for each $\delta$, the spin parity
of $\oq^*(dz)$ equals the spin parity of $\op^*(\delta)$.
\end{proof}


\subsection{Surgery admissible torus covers with highest order odd}  
\label{subsec:highest-order-odd}

In the next subsection, we will describe an algorithm which produces 
a degree $d$ torus cover in a prescribed connected component of a 
stratum that satisfies the hypotheses of Proposition \ref{prop:covering}.
The algorithm will be based on the surgeries described above. 
If the highest order of a zero in the prescribed stratum is even, 
then we will choose the initial surface to be either the
torus covering $E_g^d$ or the torus covering $O_g^d$ (see \S \ref{subsec:minimal-strata})
depending on the desired spin parity. In this section we construct 
a surgery admissible degree $d$ torus covering $p \colon Z^d_{m,n} \to T$
which will be the starting point of the algorithm when the zero of highest order in 
the prescribed stratum is odd.  The surface $Z^d_{m,n}$ will have genus $m+n$ 
and the associated $1$-form $p^*(dz)$ 
will have exactly two zeros, one with order $2m-1$ and the 
other with order $2n-1$. We will assume that $m \geq n$.

We will construct $Z^d_{m,n}$ using $O_m^d$ though
one could equally well use $E_m^d$.  Let $T$ denote the square torus 
$\Cbb/(\Zbb+ i \Zbb)$. The map $p \colon O_m^d \to T$ naturally extends 
to a degree $2m$ branched covering map, $\tp$, from the disjoint union of $O_m^d$ 
and $T$ to the square torus $T$, by defining the restriction of $\tp$ to $T$
to be the identity map.

Choose a horizontal segment $\tau$ in  $T$ of length $\epsilon< 1$ 
that has one endpoint at the origin in $T$. The inverse image $\tp^{-1}(\tau)$
has $2m(\geq 2n)$ connected components. Let $\tau_{2n}$ denote the unique
connected component of $\tp^{-1}(\tau)$ that lies in $T$, and 
choose components $\tau_1, \ldots, \tau_{2n-1}$, from 
among the remaining $2m-1$ components that lie in $O_m$. 
See Figure \ref{fig:Z^d_{m,n}}.

We will cut along each $\tau_i$ and reglue, but the
choice of gluing must be made with some care to ensure that 
the resulting 1-form has no more than two zeros.
To describe the gluing, we will suppose that for $i < 2m$, 
the $\tau_i$ are labeled in the order that they appear as one
winds around the zero of degree $2m-2$ on $O_m$. More precisely,
suppose that $\gamma: \Rbb/\Zbb \to T$ parameterizes the 
circle of radius $\epsilon/2$ centered at the origin $0 \in T$.
Then $\gamma$ lifts via $p$ to a map $\tgamma: \Rbb/(2m-1)\Zbb \to O_m$, 
and for each $i< 2n$, there exists a unique $t_i \in \Rbb/(2m-1)\Zbb$
such that $\tgamma(t_i) \in \tau_i$. By relabeling if necessary, we may 
assume that $i < j$ if and only if $t_i < t_j$.

Cut along each $\tau_i$ and reglue the bottom
of $\tau_i$ to the top of $\tau_{i+1}$. 
Let $q: Z^d_{m,n} \to \Cbb/(\Zbb+ i \Zbb)$ denote the 
resulting (connected) torus covering of degree $d$.
Because $2n$ is even, the point $q^{-1}(0)$ is a zero of order $2m-1$,
and if $Q$ denotes the other endpoint of $\tau$, then $q^{-1}(Q)$ 
is a single zero of order $2n-1$. The genus of $Z^d_{m,n}$ is $m+n$.

\begin{figure}
    \centering
    \includegraphics[scale=1.2]{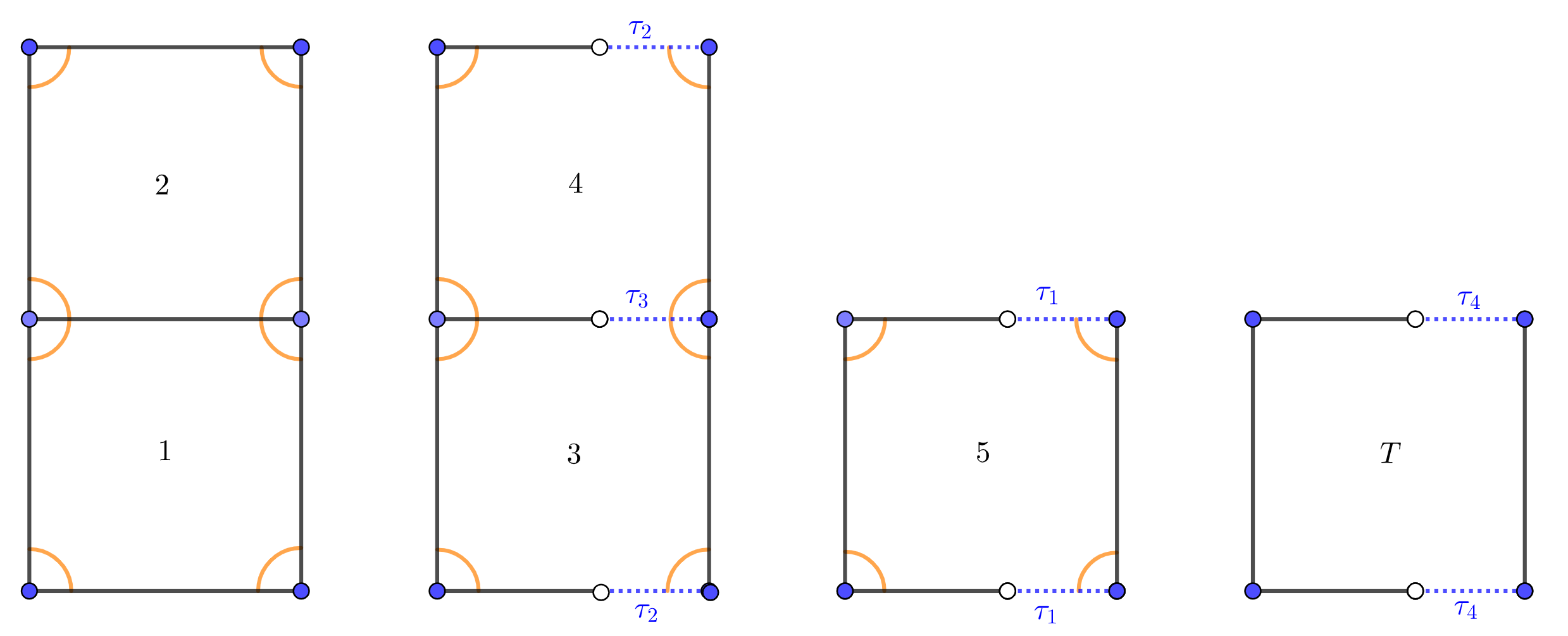}    
    \caption{The degree 6 torus cover $Z_{3,2} \in \Omega \Mcal(5,3)$ obtained
    from the surface $O_3 \in \Omega \Mcal(4)^{{\rm odd}}$. The purple 
    points correspond to the zero of order $5$, and the white 
    points correspond to the zero of order $3$. The labeling of the 
    $\tau_i$ is induced by the simple closed curve $\tgamma$ 
    on $O_3$ that winds clockwise around the pre-image of $0$.}
    \label{fig:Z^d_{m,n}}
\end{figure}


\subsection{An algorithm and examples}
\label{sec:algorithm}

In this section we describe
an algorithm for constructing a primitive torus cover of degree
$d$ in any desired connected component $K$ of a stratum 
$\Omega \mathcal{M}_g(\alpha)$. 
We then illustrate the algorithm 
with some examples.

Otherwise, we suppose that the desired divisor data 
$\alpha = (\alpha_1, ..., \alpha_n)$ satisfies 
\[
\alpha_1 \leq \alpha_2 \leq \cdots \leq \alpha_n,
\]
and if each $\alpha_i$ is even, 
we define $ j \in \Zbb/2 \Zbb$ by 
\[
    \theta~ 
    :=~  
    {\rm spin}~ +~ \frac{\alpha_1}{2}~ 
            +~ \frac{\alpha_2}{2}~ +~ \cdots~ +~ \frac{\alpha_{n-1}}{2} \mod 2
\]
where `spin' denotes the desired spin parity.

\begin{itemize}
\item If each $\alpha_i$ is an even integer, and

  \begin{itemize}

  \item $n=1$, then apply one of the constructions in  \S \ref{subsec:minimal-strata}.
  
  \item $n=2$ and $\alpha_1=\alpha_2$, then apply one of the constructions in
            \S \ref{subsec:two-zeros}.
            
  \item otherwise 
  
    \begin{itemize}

    \item if $\theta = 0$ mod $2$, then apply the surgery of \S    
        \ref{subsec:single-even-order} to the torus cover $E_g^d$ to add
        zeros of order $\alpha_1, \alpha_2, \ldots, \alpha_{n-1}$.
    
    \item if $\theta = 1$ mod $2$, then apply the surgery of \S    
        \ref{subsec:single-even-order} to the torus cover $O_g^d$ to add
        zeros of order $\alpha_1, \alpha_2, \ldots, \alpha_{n-1}$.
    
    \end{itemize}  
    
    \end{itemize}
    
\item otherwise (when some $\alpha_i$ is  odd) 

  \begin{itemize}
  
  \item if $\alpha_n$ is even, then apply the surgeries of \S    
        \ref{subsec:single-even-order} and \S \ref{subsubsec:pair_same_odd}
        to the torus cover $O_g^d$ to add
        zeros of order $\alpha_1, \alpha_2, \ldots, \alpha_{n-1}$.

  \item if $\alpha_n$ is odd, then some other zero, say $\alpha_j,$ is odd. 
        Begin with the  torus cover $Z^d_{m,n}$ where $m=(\alpha_n+1)/2$ and $n=(\alpha_j+1)/2$, and apply the surgeries of \S   
        \ref{subsec:single-even-order} and \S \ref{subsubsec:pair_same_odd}
        to add zeros of order $\alpha_i$ for $i \neq j$ or $n$.
  
  \end{itemize}

\end{itemize}

\subsubsection{Torus covers in $\Omega\mathcal{M}_4(1,2,3)$}
Suppose that we wish to construct a degree $4$ torus cover $p$
so that $p^*(dz)$ has a zero of order 3, a zero of order 2,
and a zero of order 1. That is, we have $\alpha_3=3$ 
which is odd, and $\alpha_1=1$ is odd as well. 
Hence we begin with the torus cover $Z^4_{2,1}$ 
whose construction is described in \ref{subsec:highest-order-odd}.
Then we perform the surgery described in \ref{subsec:single-even-order}
to add a zero of order two. See Figure \ref{fig:1-2-3}. 
In more detail, the surface $Z^4_{2,1}$
is obtained by slitting the $L$-shaped surface $O_2$ that lies in 
$\Omega \Mcal_2 (2)$ along the segment $\tau_2$, slitting a square 
torus along a segment $\tau_2$, and then gluing the top (resp. bottom) of $\tau_1$
to the bottom (resp. top) of $\tau_2$. The resulting surface 
is cut along the segments $\sigma_0$ and $\sigma_1$ and the 
the left (resp. right) of $\sigma_0$
to the right (resp. right) of $\sigma_1$.

Note that by adjoining $d-4$ squares to the right---that is, by 
replacing the unit square torus with the rectangular 
torus $\Cbb/((d-3)\Zbb + i \Zbb)$---one
obtains a degree $d > 4$ torus covering in $\Omega\mathcal{M}_4(1,2,3)$.

\begin{figure}
    \centering
    \includegraphics[scale=1.25]{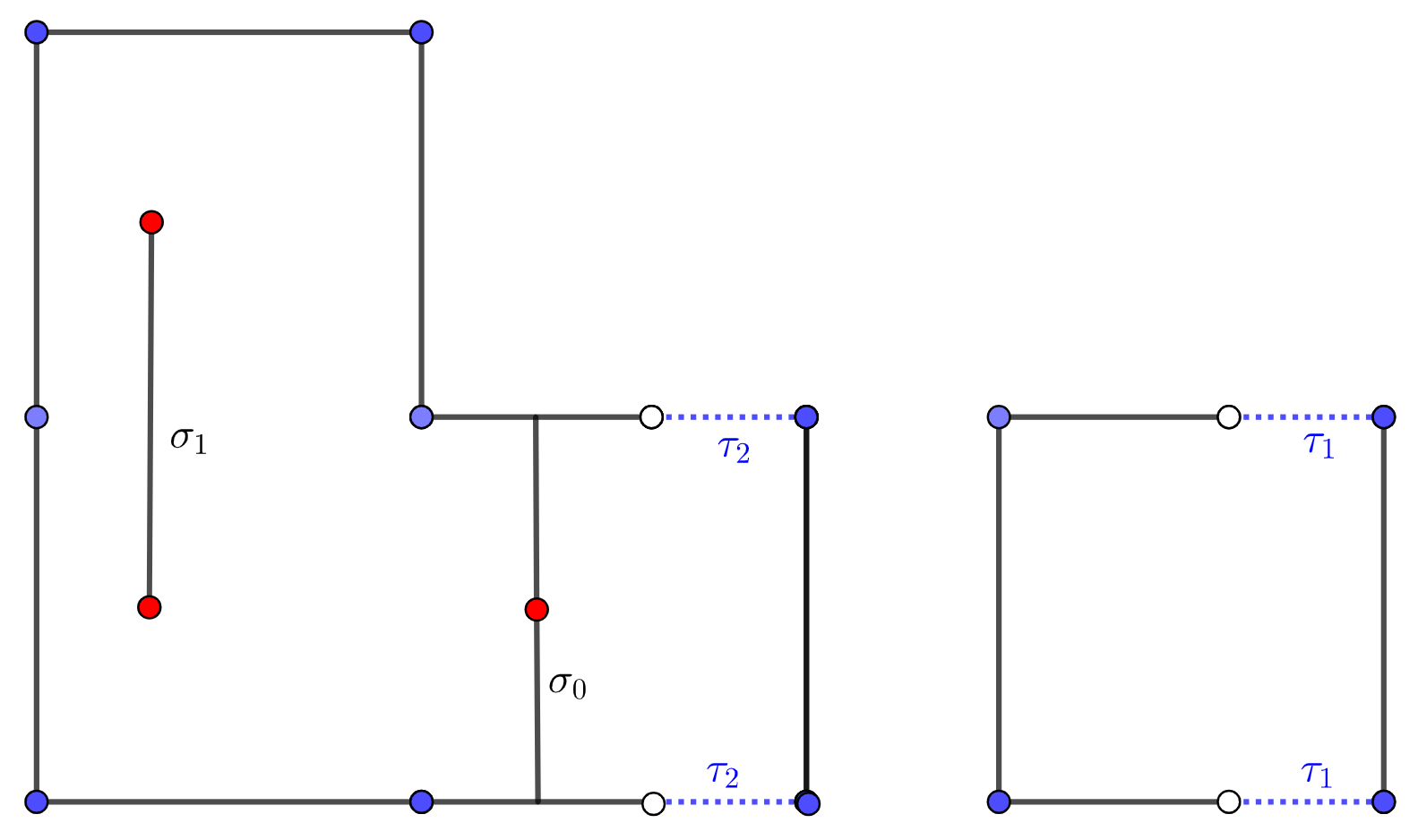}    
    \caption{A degree 4 torus cover in the stratum $\Omega \Mcal_4(1,2,3)$.
    The blue points correspond to a zero of order 3, the white
    points correspond to a zero of order 2, and the red points 
    correspond to a zero of order 1.}
    \label{fig:1-2-3}
\end{figure}


\subsubsection{Torus covers in $\Omega\mathcal{M}^{\mathrm{odd}}_7(2,4,6)$}

We describe the construction of a torus cover of degree 7 
that has odd spin parity and has one zero of order 6, one zero
of order 4, and one zero of order 2. Since $\theta = 1 + 2+1 = 0$ mod $2$,
we begin with the degree 7 torus cover $p:E_4^7 \to T$ 
such that $p^*(dz) \in \Omega \mathcal{M}_4(6)$, and we 
then we use the surgery of \S \ref{subsec:single-even-order}
to add zeros of order $2$ and $4$ while preserving the degree. 
See Figure \ref{fig:2-4-6}. In detail, `opposing sides' of the 
polygon in  Figure \ref{fig:2-4-6} are identified with the 
exception of the sides labeled $\gamma_i$ in which case we  
$\gamma_1$ identify with $\gamma_3$ and 
$\gamma_2$ identify with $\gamma_4$.
To add the additional zero of order $4$ (resp. $2$)
we cut along $\sigma_0$ and $\sigma_1$ (resp. $\sigma_0'$ and $\sigma_1'$)
and reglue. 

\begin{figure}
    \centering
    \includegraphics[scale=1.1]{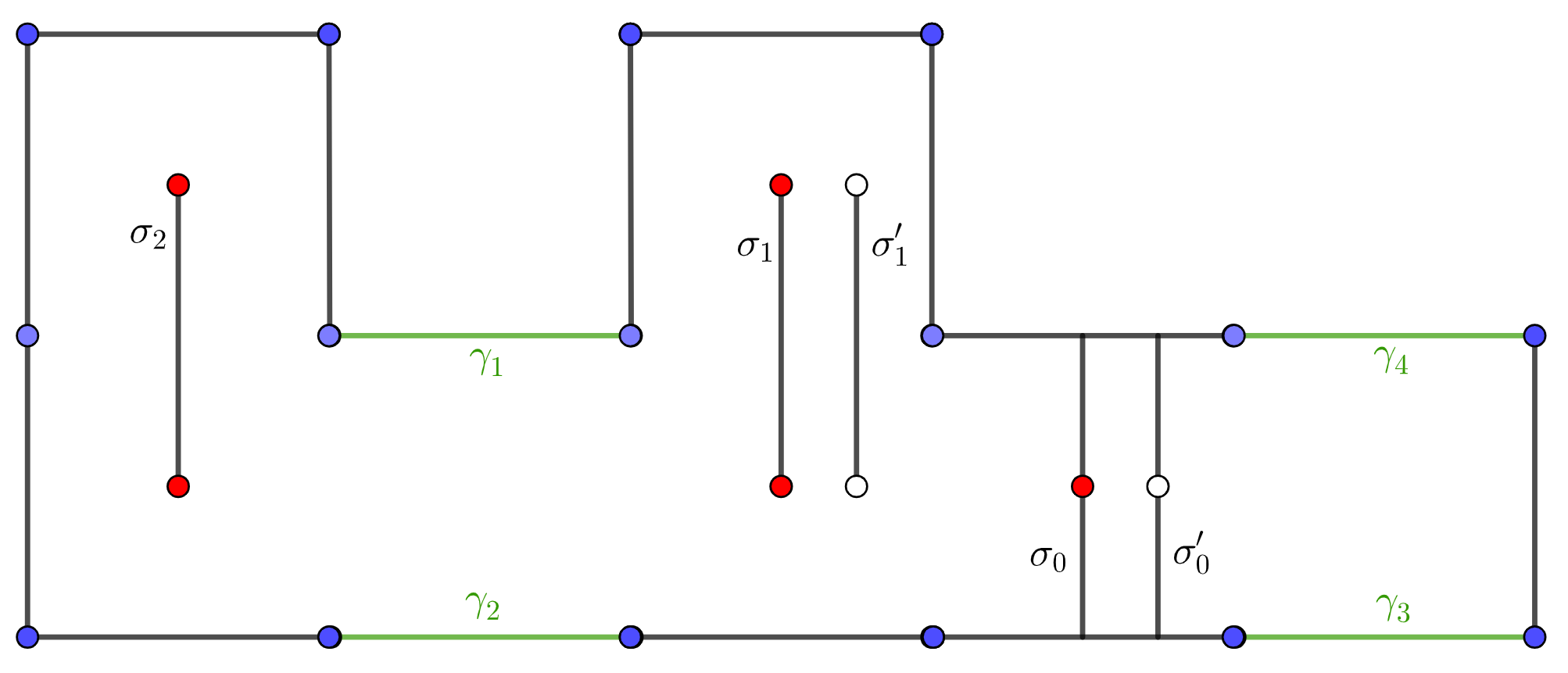}    
    \caption{A degree 7 torus cover in the odd component of 
    $\Omega\mathcal{M}_7(2,4,6)$.  The blue points correspond to
    the zero of order $6$, the red points correspond to the zero of 
    order $4$, the white points correspond to the zero of order $2$.}
    \label{fig:2-4-6}
\end{figure}

By adjoining $d-7$ additional squares to the right, one
obtains a degree $d > 7$ torus covering in $\Omega\mathcal{M}_4(1,2,3)$.


\end{document}